\definecolor{liens}{rgb}{1,0,0}
\newtheorem*{thmintro}{Main Theorem}
\newtheorem*{proposition*}{Proposition~\ref{lem:singcases}}
\newtheorem*{lem**}{Lemma~\ref{lem:doublezero}}
\newtheorem*{thm**}{Theorem~\ref{theo:maintheotransc}}
\newtheorem{thm}{Theorem}[section]
\newtheorem{cor}[thm]{Corollary}
\newtheorem{lemma}[thm]{Lemma}
\newtheorem{lem}[thm]{Lemma}
\newtheorem{prop}[thm]{Proposition}
\newtheorem{defi}[thm]{Definition} 
\newenvironment{prf}[1]{\trivlist
\item[\hskip \labelsep{\bf #1.\hspace*{.3em}}]}{~\hspace{\fill}~$\square$\endtrivlist}
\newtheorem{rem}[thm]{Remark} 
\newtheorem{exa}[thm]{Example}
\newtheorem{assumption}[thm]{Assumption}
\numberwithin{equation}{section}
\def\iup{{\tilde{\iota}}}
\def\Z{\mathbb{Z}}
\def\C{\mathbb{C}}
\def\R{\mathbb{R}}
\def\Q{\mathbb{Q}}
\def\P1{\mathbb{P}^{1}}
\def\beq{\begin{equation}}
\def\eeq{\end{equation}}
\def\Etproj{\overline{E_t}}
\def\P2{\mathbb{P}^{2}}
\def\CX{{\mathbb C}}
\def\P1{\mathbb{P}^{1}}
\def\calM{{\mathcal{M}}}
\def\a{{\alpha}}
\def\b{{\beta}}
\begin{document}
\title{Walks in the quarter plane: genus zero case}
\author{Thomas Dreyfus}
\address{Institut de Recherche Math\'ematique Avanc\'ee, U.M.R. 7501 Universit\'e de Strasbourg et C.N.R.S. 7, rue Ren\'e Descartes 67084 Strasbourg, FRANCE}
\email{dreyfus@math.unistra.fr}
\author{Charlotte Hardouin}
\address{Universit\'e Paul Sabatier - Institut de Math\'ematiques de Toulouse, 118 route de Narbonne, 31062 Toulouse.}
\email{hardouin@math.univ-toulouse.fr}
\author{Julien Roques}
\address{Univ Lyon, Universit\'e Claude Bernard Lyon 1, CNRS UMR 5208, Institut Camille Jordan, 43 blvd. du 11 novembre 1918, F-69622 Villeurbanne cedex, France}
\email{roques@math.univ-lyon1.fr}
\author{Michael F. Singer }
\address{Department of Mathematics, North Carolina State University,
Box 8205, Raleigh, NC 27695-8205, USA}
\email{singer@ncsu.edu}

\keywords{Random walks, Difference Galois theory, $q$-difference equations, Transcendence}

\thanks{This project has received funding from the European Research Council (ERC) under the European Union's Horizon 2020 research and innovation programme under the Grant Agreement No 648132. The second author would like to thank the ANR-11-LABX-0040-CIMI within
the program ANR-11-IDEX-0002-0 for its partial support. The second author's work is also supported by ANR Iso-Galois. 
The work of the third author has been partially supported by the LabEx PERSYVAL-Lab (ANR-11-LABX-0025-01) funded by the French program Investissement d'avenir. 
The work of the fourth author was partially supported by a grant from the Simons Foundation (\#349357, Michael Singer). }

 \subjclass[2010]{05A15,30D05,39A06}
\date{\today}

\bibliographystyle{amsalpha} 

\begin{abstract} 
We use Galois theory of difference  equations  to study the nature of the generating series of (weighted) walks in the quarter plane with genus zero kernel curve. Using this approach, we prove that the generating series do not satisfy any nontrivial (possibly nonlinear) algebraic differential equation with rational coefficients.  
\end{abstract}
\maketitle
\tableofcontents
\pagestyle{myheadings}
\markboth{T.~DREYFUS, C.~HARDOUIN, J.~ROQUES, M.F.~SINGER}{WALKS IN THE QUARTER PLANE: GENUS ZERO CASE}
\sloppy

\pagebreak

\section*{Introduction}  The generating series of lattice walks in the quarter plane have garnered much interest in recent years.  In \cite{DHRS}, we introduced a new method that allowed us to study the nature of the generating series of many lattice walks with small steps ({\it i.e.}, whose step set  is a subset of $\{ -1,0,1\}^2\backslash \{(0,0)\}$) in the quarter plane. In particular, the paper \cite{DHRS} is concerned with the differential nature of these generating series, the basic question being:  which of them satisfy differential equations? The present paper is a continuation of this research.
We will study weighted  models of  walks with small steps in the quarter plane $\Z_{\geq 0}^{2}$. More precisely, let $(d_{i,j})_{(i,j)\in\{0,\pm 1\}^{2}}$ be a family of elements of $\Q\cap [0,1]$ such that $\sum_{i,j} d_{i,j}=1$.  We encode the eight cardinal directions of the plane by pairs of integers $(i,j)$ with $i,j \in \{0,1\}$.  We  consider a weighted walk in the quarter plane $\Z_{\geq 0}^{2}$ satisfying the following properties: 
\begin{itemize} 
\item it starts at $(0,0)$; 

\item it  takes steps in a certain subset of the set of cardinal directions, which is called the \emph{model of the walk}.
\end{itemize}
For $(i,j)\in\{0,\pm 1\}^{2}\backslash\{(0,0)\}$ (resp. $(0,0)$), the  element   $d_{i,j}$ is  a  weight on the step $(i,j)$  and can be viewed as  the probability for the walk  to go in the direction $(i,j)$ (resp.  to stay  at the same position).  The step set or the \emph{ model} of the walk  corresponds  the set of directions with nonzero weights, that is,
$$
\{ (i,j ) \in \{0,\pm 1\}^{2} \backslash\{(0,0)\}  | d_{i,j} \neq 0\}.
$$
If $d_{0,0}=0$ and if the nonzero $d_{i,j}$ all have the same value, we say that  the model  is unweighted. 

{ The {\it weight of the walk} is defined to be the product of the weights of its component steps.} For any $(i,j)\in \Z_{\geq 0}^{2}$ and any $k\in \Z_{\geq 0}$, we let $q_{i,j,k}$ be the {sum of the weights of all walks reaching}  the position $(i,j)$ from the initial position $(0,0)$ after $k$ steps. We introduce the corresponding trivariate generating series\footnote{In several papers it is not assumed that $\sum_{i,j} d_{i,j}=1$. But after a rescaling of the $t$ variable, we may always reduce to the case  $\sum_{i,j} d_{i,j}=1$.}
$$
Q(x,y,t):=\displaystyle \sum_{i,j,k\geq 0}q_{i,j,k}x^{i}y^{j}t^{k}.
$$

The typical questions considered in the literature are: 
\begin{itemize}
\item is $Q(x,y,t)$ algebraic over $\Q(x,y,t)$?  
\item is $Q(x,y,t)$ $x$-holonomic (resp. $y$-holonomic), {\it i.e.}, is $Q(x,y,t)$, seen as a function of $x$, a solution of some nonzero  linear differential equation with coefficients in  $\Q(x,y,t)$?
\item is $Q(x,y,t)$ $x$-differentially algebraic  (resp. $y$-differentially algebraic), {\it i.e.}  is $Q(x,y,t)$, seen as a function of $x$, a solution of some nonzero  (possibly nonlinear) polynomial differential equation with coefficients in  $\Q(x,y,t)$?  In case of a negative answer, we say that $Q(x,y,t)$ is $x$-differentially transcendental  (resp. $y$-differentially transcendental)\footnote{ We changed the terminology we used in \cite{DHRS}, namely hyperalgebraic and hypertranscendent, because we believe that differentially algebraic and differentially transcendental are more transparent terms.}. 
\end{itemize}

Before describing our main result, we will briefly describe the state of the art. In the seminal paper \cite{BMM}, Bousquet-M\'elou and Mishna studied such questions in the {unweighted case} (see also \cite{Mishna09}). Taking symmetries into consideration and eliminating unweighted  models equivalent to  models  on the half plane (whose generating series is algebraic), Bousquet-M\'elou and Mishna first showed that, amongst the 256 possible   unweighted models, 
it is sufficient to study the above questions for an explicit list of 79  unweighted models. 
 Following  ideas of Fayolle, Iasnogorodski and Malyshev   (see for instance \cite{fayolle1999random,FIM}),  they  associated to each  unweighted model 
 a group of birational automorphisms of $\C^{2}$ and classified the unweighted models
 accordingly.  They found that 23 of the 79 above-mentioned { unweighted models
 were associated with a finite group and showed that for all but one of these 23 models, the generating series was $x$-, $y$- and $t$-holonomic; the remaining one was shown to have the same property by Bostan, van Hoeij and Kauers in \cite{BostanKauersTheCompleteGenerating}. In \cite{BMM}, Bousquet-M\'elou and Mishna conjectured that the 56  unweighted models whose associated group is infinite are not holonomic.  Furthermore, following Fayolle, Iasnogorodski and Malyshev, the 56 unweighted models may be gathered into two families according to the genus of an algebraic curve, called the {\it kernel curve}, attached to each model:  
\begin{itemize}
\item 5 of these  unweighted models
lead to a curve of genus zero; they will be called the genus zero   unweighted models
\item 51 of them lead to a curve of genus one; they will be called the genus one  unweighted models
\end{itemize}   
In \cite{KurkRasch}, Kurkova and Raschel showed that the 51 genus one  unweighted models   
with infinite group have nonholonomic generating series 
(see also \cite{BRS,RaschelJEMS}).  Recently, Bernardi, Bousquet-M\'elou and Raschel \cite{BBMR15, BBMR17} have shown that 9 of these 51 unweighted models have $x$- and $y$-differentially algebraic generating series, despite the fact that they are not $x$- or $y$-holonomic.

In \cite{DHRS}, we introduced a new approach to these problems that allowed us to show that, except for the 9 exceptional  unweighted models  
of \cite{BBMR15,BBMR17}, the generating series of genus one  unweighted models 
with infinite groups are $x$- and $y$-differentially transcendental. This reproves and generalizes the results of \cite{KurkRasch}.  Furthermore our results allowed us to show that the 9 exceptional  series are not holonomic but are $x$- and $y$-differentially algebraic, recovering some of the results of \cite{BBMR15,BBMR17}.  
It is worth mentioning that there are several results in the literature about the behavior of  $Q(x,y,t)$ with respect to the variable $t$. For instance, in \cite{MR09}, Mishna and Rechnitzer showed that $Q(1,1,t)$ is not $t$-holonomic for two of  the 5
 genus zero unweighted models
and in \cite{MelcMish}, Melczer and Mishna showed that this remained true for all 5 of the genus zero  unweighted models.  
On the other hand, Bostan, Raschel and Salvy proved in \cite{BRS} that  $Q(0,0,t)$ is not $t$-holonomic for every genus one  unweighted  model   with an infinite group.
We also note that, in \cite{BBMR15, BBMR17}, it is shown that the generating series of the 9 exceptional  genus zero unweighted models
mentioned above are differentially algebraic in the variable $t$ as well.
 Finally, the first two authors proved in \cite{dreyfus2019length} that, if the generating series is $x$- or $y$-differentially transcendental, then it is $t$-differentially transcendental. Thus, although the present paper focuses on the $x$- and $y$-differential properties of $Q(x,y,t)$, it also gives information concerning  its $t$-differential properties.

In the present paper, we start from    the  5 unweighted  models corresponding to a genus zero kernel curve. These models arise from the following 5 sets of steps.
 \begin{equation}\label{the five step set}\tag{S}
 \begin{tikzpicture}[scale=.4, baseline=(current bounding box.center)]
\foreach \x in {-1,0,1} \foreach \y in {-1,0,1} \fill(\x,\y) circle[radius=2pt];
\draw[thick,->](0,0)--(-1,1);
\draw[thick,->](0,0)--(0,1);
\draw[thick,->](0,0)--(1,-1);
\end{tikzpicture}\quad 
\begin{tikzpicture}[scale=.4, baseline=(current bounding box.center)]
\foreach \x in {-1,0,1} \foreach \y in {-1,0,1} \fill(\x,\y) circle[radius=2pt];
\draw[thick,->](0,0)--(-1,1);
\draw[thick,->](0,0)--(1,1);
\draw[thick,->](0,0)--(1,-1);
\end{tikzpicture}\quad \begin{tikzpicture}[scale=.4, baseline=(current bounding box.center)]
\foreach \x in {-1,0,1} \foreach \y in {-1,0,1} \fill(\x,\y) circle[radius=2pt];
\draw[thick,->](0,0)--(-1,1);
\draw[thick,->](0,0)--(0,1);
\draw[thick,->](0,0)--(1,1);
\draw[thick,->](0,0)--(1,-1);
\end{tikzpicture}\quad
\begin{tikzpicture}[scale=.4, baseline=(current bounding box.center)]
\foreach \x in {-1,0,1} \foreach \y in {-1,0,1} \fill(\x,\y) circle[radius=2pt];
\draw[thick,->](0,0)--(-1,1);
\draw[thick,->](0,0)--(0,1);
\draw[thick,->](0,0)--(1,1);
\draw[thick,->](0,0)--(1,0);
\draw[thick,->](0,0)--(1,-1);
\end{tikzpicture}\quad 
\begin{tikzpicture}[scale=.4, baseline=(current bounding box.center)]
\foreach \x in {-1,0,1} \foreach \y in {-1,0,1} \fill(\x,\y) circle[radius=2pt];
\draw[thick,->](0,0)--(-1,1);
\draw[thick,->](0,0)--(0,1);
\draw[thick,->](0,0)--(1,0);
\draw[thick,->](0,0)--(1,-1);
\end{tikzpicture} 
\end{equation}

We say that a weighted model arises from \eqref{the five step set} when this model is obtained by  choosing a set of steps in \eqref{the five step set} and by assigning  nonzero weights to this set of steps. One can show that the kernel curve of a weighted model arising from \eqref{the five step set} is still a genus zero curve.
Our main result may be stated as follows:

\begin{thmintro}\label{theo:maintheotransc}
If $0<t<1$ is transcendental\footnote{This assumption is used repeatedly in our proofs and is crucial in our proof of Proposition~\ref{prop:quotient1}.} and if  the weighted model  arises from \eqref{the five step set}, then  $Q(x,y,t)$ is $x$- and $y$-differentially transcendental.
\end{thmintro}

Our study generalizes the result of Mishna and Rechnitzer 
on the non holonomy of the complete generating series of the unweighted models of walks $\{NW,N,SE\}$ and $\{NW,NE,SE\}$ (see \cite[Theorem 1.1]{MR09}) and also the one by Melczer and Mishna (for the five cases).  
Our strategy of proof is  inspired by \cite[Chapter 6]{FIM}.  We  associate to each of the generating series of these weighted  models  a function meromorphic on $\CX$. These associated functions satisfy first order difference equations  of the form $y(qs) - y(s) = b(s)$ for a suitable $q \in \CX$ and $b(s)\in\C(s)$.   The  associated  functions are differentially transcendental if and only if the generating series are differentially transcendental.  We then use criteria stating that if these associated functions were differentially algebraic then  the $b(s)$ must themselves satisfy $b(s) = h(qs)-h(s) $ for some rational functions $h(s)$ on $\CX$.  This latter condition puts severe limitations on the poles of the $b(s)$ and, by analyzing the $b(s)$ that arise, we show that these restrictions are not met.  Therefore the generating series are not differentially algebraic, see Theorem \ref{theo:maintheotransc}. Note that some  unweighted models of walks in dimension three happen to be, after projection, equivalent to  two dimensional weighted models of walks \cite{BoBMKaMe-16,DuHoWa-16}.   We apply our theorem in this setting as well. We note that finding the difference equation $y(qs) - y(s) = b(s)$ and the remaining calculations involve only algebraic computations as is true in \cite{DHRS}. The general approach followed in the present work is inspired by \cite{DHRS} but the details are quite different and justify an independent exposition.

The rest of the paper is organized as follows.  In Section~\ref{sec1}, we first present the generating series attached to a weighted model of walks and we give some of their basic properties. We then introduce the kernel curves (they are algebraic curves associated to any model of walk in the quarter plane) and we state some of their properties. 
One of their main properties is that, for the weighted models 
arising from    \eqref{the five step set}, the kernel curves have genus zero and, hence, can be parameterized by birational maps from  $\P1(\CX)$. Such parameterizations, suitable for our needs, are given at the end of Section~\ref{sec1}.   In Section~\ref{sec3}, using these parameterizations, we attach to any model  some meromorphic functions on $\CX$ that satisfy simple $q$-difference equations of the form ${y(qs)-y(s) = b(s)}$ for some $b(s) \in \CX(s)$. Moreover, we prove that these meromorphic functions are differentially algebraic if and only if the generating series of the  associated models 
are differentially algebraic.  In addition, we present necessary conditions  on the poles of $b$ when these equations have differentially algebraic solutions.  In Section~\ref{sec4}, we show that these necessary conditions do not hold for the   weighted models arising from    \eqref{the five step set}.

\vskip 5 pt

\noindent \textbf{Acknowledgments} The authors would like to thank Kilian Raschel for pointing out many references related to this work. In addition, we would like to thank the anonymous referees for many useful comments and suggestions concerning this article.

\section{Weighted walks in the quarter plane: generating series, functional equation and kernel curve}
\label{sec1}

In this section, we consider a weighted walk with small steps in the quarter plane $\Z_{\geq 0}^{2}$ and the corresponding trivariate generating series $Q(x,y,t)$ as in the introduction. We first recall a functional equation satisfied by $Q(x,y,t)$. 
We then recall the definition of the so-called kernel curve associated to the walk under consideration and give its main properties when the steps set is one of the five steps sets listed in \eqref{the five step set}.

\subsection{Kernel and functional equation}\label{sec1.2}
 
The {\it kernel} of a weighted model  is defined by 
$$
K(x,y,t):=xy (1-t S(x,y))
$$
where 
$$
\begin{array}{lll}
S(x,y) &=&\sum_{(i,j)\in \{0,\pm 1\}^{2}} d_{i,j}x^i y^j\\
&=& A_{-1}(x) \frac{1}{y} +A_{0}(x)+ A_{1}(x) y\\
&= & B_{-1}(y) \frac{1}{x} +B_{0}(y)+ B_{1}(y) x,
\end{array}
$$ 
and $A_{i}(x) \in x^{-1}\Q[x]$, $B_{i}(y) \in y^{-1}\Q[y]$. 

The following result generalizes \cite[Lemma 4]{BMM}.

\begin{lem}\label{lem:funceqn}
The generating series $Q(x,y,t)$ satisfies the following functional equation:
\begin{equation} \label{eq:funcequ}
K(x,y,t)Q(x,y,t)=xy -F^{1}(x,t) - F^{2}(y,t)+td_{-1,-1} Q(0,0,t)
\end{equation}
 where 
$$
 F^{1}(x,t):= -K(x,0,t)Q(x,0,t), \ \ F^{2}(y,t):= -K(0,y,t)Q(0,y,t).$$
\end{lem}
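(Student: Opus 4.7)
The plan is to derive the functional equation combinatorially by summing the step-by-step recurrence for $q_{i,j,k}$, with a careful inclusion-exclusion bookkeeping of the boundary of the quadrant.

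First, I would write down the recurrence that governs the walk. Since the walk is confined to $\Z_{\geq 0}^{2}$ and moves to $(i,j)$ at time $k+1$ by performing a last step $(i',j')\in\mathcal{D}$ from $(i-i',j-j')$, one has $q_{i,j,0}=\delta_{(i,j),(0,0)}$ and, for $k\geq 0$,
\[
q_{i,j,k+1} \;=\; \sum_{(i',j')\in \mathcal{D}} d_{i',j'}\, q_{i-i',\,j-j',\,k},
\]
with the convention $q_{a,b,k}=0$ if $a<0$ or $b<0$. Multiplying by $x^{i}y^{j}t^{k+1}$, summing over $i,j\geq 0$ and $k\geq 0$, and using $q_{0,0,0}=1$, I would get
\[
Q(x,y,t)-1 \;=\; t\sum_{(i',j')\in\mathcal{D}} d_{i',j'}\, x^{i'}y^{j'}\!\!\sum_{\substack{a\geq \max(0,-i')\\ b\geq \max(0,-j'),\ k\geq 0}} q_{a,b,k}\, x^{a}y^{b}t^{k}.
\]

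Next I would split the inner sum according to which of $i'$ and $j'$ equals $-1$, because those are the only values of $(i',j')\in\{-1,0,1\}^{2}$ that create a defect at the boundary. Writing the inner sum as $Q(x,y,t)$ minus the missing slice(s):
\[
\sum_{a\geq \max(0,-i'),\,b\geq \max(0,-j')} q_{a,b,k}x^{a}y^{b}t^{k}
= Q(x,y,t) - \mathbf{1}_{j'=-1}\,Q(x,0,t) - \mathbf{1}_{i'=-1}\,Q(0,y,t) + \mathbf{1}_{(i',j')=(-1,-1)}\,Q(0,0,t),
\]
where the final term restores the corner that was subtracted twice. Plugging this back and regrouping the $j'=-1$ and the $i'=-1$ sums using the definitions of $A_{-1}(x)=\sum_{i'} d_{i',-1}x^{i'}$ and $B_{-1}(y)=\sum_{j'} d_{-1,j'}y^{j'}$, I obtain
\[
Q(x,y,t)-1 \;=\; tS(x,y)Q(x,y,t) - tA_{-1}(x)y^{-1}Q(x,0,t) - tB_{-1}(y)x^{-1}Q(0,y,t) + td_{-1,-1}x^{-1}y^{-1}Q(0,0,t).
\]

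Finally, I would clear denominators by multiplying through by $xy$. This yields
\[
xy(1-tS(x,y))Q(x,y,t) \;=\; xy - tx A_{-1}(x)\,Q(x,0,t) - ty B_{-1}(y)\,Q(0,y,t) + t d_{-1,-1} Q(0,0,t).
\]
It only remains to identify $-K(x,0,t)=txA_{-1}(x)$ and $-K(0,y,t)=tyB_{-1}(y)$, which follow at once from $K(x,y,t)=xy-t(xA_{-1}(x)+xyA_{0}(x)+xy^{2}A_{1}(x))$ by specialization at $y=0$ and $x=0$. Substituting the definitions of $F^{1}(x,t)$ and $F^{2}(y,t)$ then gives the claimed equation. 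I do not anticipate any real obstacle; the one point that must be handled carefully is the inclusion-exclusion at $(i,j)=(0,0)$ produced by the southwest step $(-1,-1)$, which is exactly the source of the $td_{-1,-1}Q(0,0,t)$ correction and the only way in which this statement differs from the unweighted case of \cite{BMM}.
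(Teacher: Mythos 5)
Your proof is correct and follows essentially the same route as the paper, which likewise derives \eqref{eq:funcequ} from the step recurrence $q_{i,j,k+1}=\sum_{(\alpha,\beta)\in\mathcal{D}}d_{\alpha,\beta}q_{i-\alpha,j-\beta,k}$ with inclusion-exclusion at the two axes and the corner (the paper groups the double sum by the position at time $k$, you group it by the last step, which is the same computation transposed). The identifications $-K(x,0,t)=txA_{-1}(x)$ and $-K(0,y,t)=tyB_{-1}(y)$ and the corner correction $td_{-1,-1}Q(0,0,t)$ match the paper's argument exactly.
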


\begin{proof}

As in \cite[Lemma 4]{BMM}, we proceed as follows. First, let us prove that if we do not consider the quadrant constraint, the functional equation would be ${(1-tS(x,y))Q(x,y,t)=1}$. Indeed, in this situation, if we write $Q(x,y,t)=\displaystyle \sum_{\ell =0}^{\infty} Q_{\ell}(x,y)t^{\ell}$, then $Q_{0}(x,y)=1$ and $Q_{\ell+1}(x,y)=S(x,y)Q_{\ell}(x,y)$. This is exactly $(1-tS(x,y))Q(x,y,t)=1$.  However, this formula does not take into account  the quadrant constraint. We need to withdraw the  walks that leave the $x$-axis (resp. $y$-axis), i.e. $ty^{-1}A_{-1}(x)Q(x,0,t)$ (resp. $tx^{-1}B_{-1}(y)Q(0,y,t)$). Since we withdraw two times the walks going from $(0,0)$ in south west, we have to add the term  $tx^{-1}y^{-1}d_{-1,-1} Q(0,0,t)$.  So $$(1-t{S(x,y)})Q(x,y,t)=1-ty^{-1}A_{-1}(x)Q(x,0,t)-tx^{-1}B_{-1}(y)Q(0,y,t)+ tx^{-1}y^{-1}d_{-1,-1} Q(0,0,t).$$
It now suffices to multiply by $xy$ the above equality.  \end{proof}

\subsection{The algebraic curve defined by the kernel}\label{sec:algcurvedefkernelBIS}
We recall that the affine curve $E_{t}$ defined by the kernel ${K(x,y,t)}$ is given by 
 $$
 E_t = \{(x,y) \in \C \times \C \ \vert \ K(x,y,t) = 0\}. 
 $$ 
  In Section~\ref{sec3} we show that the problem of showing that $Q(x,y,t)$ is $x$- and $y$-differentially transcendental can be reduced to understanding the relations among the poles of a rational function on $E_{t}$. When dealing with a rational function $b(s)$  on $\C$, one often needs to consider its behavior "as $s$ goes to infinity". Although this can frequently be finessed, it is convenient to add a point at infinity, constructing the complex projective line as defined below, and consider the behavior at this point. When dealing with rational functions on curves in the affine plane, their behavior, such as the appearance of poles, often depends on missing points "at infinity" and we will see that this is the case in Section~\ref{sec3}. To do this we must include the missing points at infinity and so it is useful to compactify such a curve by adding these  points. This can be done in several ways (see Remark~\ref{rem:compact} below) but, as in   \cite{DHRS},}
it will be useful to consider a compactification $\Etproj$ of $E_{t}$ in $\P1(\C) \times \P1(\C)$, which is called the kernel curve.

We first recall that $\P1(\C)$ denotes the complex projective line, which is the quotient of $\C \times \C \setminus \{(0,0)\}$ by the equivalence relation $\sim$ defined by 
$$
(x_{0},x_{1}) \sim (x_{0}',x_{1}') \Leftrightarrow \exists \lambda \in \C^{*},  (x_{0}',x_{1}') = \lambda (x_{0},x_{1}). 
$$
The equivalence class of $(x_{0},x_{1}) \in \C \times \C \setminus \{(0,0)\}$ is usually denoted by $[x_{0}:x_{1}] \in \P1(\C)$. The map 
$
x \mapsto  [x:1]
$ 
embeds $\C$ inside $\P1(\C)$. The latter map is not surjective: its image is $\P1(\C) \setminus \{[1:0]\}$; the missing point $[1:0]$  is usually denoted by $\infty$. 
  Now, we embed $E_{t}$  inside $\P1(\C) \times \P1(\C)$ via  ${(x,y) \mapsto ([x:1],[y:1])}$. The kernel curve $\Etproj$ is the closure of this embedding of $E_{t}$.  In other words, the kernel curve $\Etproj $ is the algebraic curve defined by 
$$
\Etproj = \{([x_{0}:x_{1}],[y_{0}:y_{1}]) \in \P1(\C) \times \P1(\C) \ \vert \ \overline{K}(x_0,x_1,y_0,y_1,t) = 0\}
$$
where $\overline{K}(x_0,x_1,y_0,y_1,t)$ is the following bihomogeneous polynomial
\begin{equation}\label{eq:kernelwalk}
\overline{K}(x_0,x_1,y_0,y_1,t)={x_1^2y_1^2K(\frac{x_0}{x_1},\frac{y_0}{y_1},t)}= x_0x_1y_0y_1 -t \sum_{i,j=0}^2 d_{i-1,j-1} x_0^{i} x_1^{2-i}y_0^j y_1^{2-j}. 
 \end{equation}

Since $\overline{K}(x_0,x_1,y_0,y_1,t)$ is quadratic in each of the variables, the curve $\Etproj$ is naturally endowed with two involutions $\iota_1,\iota_2$, namely the vertical and horizontal  switches of $\Etproj$ defined, for any $P=(x,y) \in \Etproj$, by  
$$
\{P,\iota_1(P)\} = \Etproj \cap (\{x\} \times \P1(\C))
\text{ and }
\{P,\iota_2(P)\} = \Etproj \cap (\P1(\C) \times \{y\})
$$
 (see Figure \ref{figiota}).
 Let us also define $$\sigma:=\iota_{2}\circ\iota_{1}.$$

\begin{rem}\label{rem:compact}
 There are several choices for the compactification of $E_{t}$. For instance, we could have compactified the curve $E_{t}$ in the complex projective plane $\mathbb{P}^2(\C)$ instead of $\P1(\C) \times \P1(\C)$ but, in this case, the compactification is not defined by a biquadratic polynomial so that the construction of the above-mentioned involutions in that situation is not so natural. 
\end{rem}

\begin{assumption}\label{assumption:oneofthefive}
From now on, we consider a weighted  model arising from \eqref{the five step set} and we fix a transcendental real number $0<t< 1$.\footnote{In this paper, we have assumed that the $d_{i,j}$ belong to $\Q$, but everything stays true if we assume that $d_{i,j}$ are positive real numbers and that $t$ is transcendental over the field $\Q(d_{i,j})$.}
\end{assumption}

\begin{prop}\label{lem:genuszeroKernelBIS}
The curve $\Etproj$ is an irreducible genus zero curve. 
\end{prop}

\begin{proof}
This is the analog of \cite[Lemmas~2.3.2, 2.3.10]{FIM},  where the case $t=1$ is considered.
\end{proof}

\begin{figure}
\begin{center}
\includegraphics{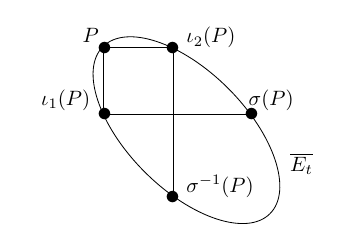}
\caption{The maps $\iota_{1},\iota_{2}$ restricted to the kernel curve $\Etproj $}\label{figiota}
\end{center}
\end{figure}

\subsection{Parametrization of $\Etproj$} 

Since $\Etproj$ has genus zero,  there is a rational parameterization of $\Etproj$, see \cite[Page 198, Ex.1]{Fultonalgcurves}, {\it i.e.}, there exists a birational map 
$$
\begin{array}{llll}
\phi :&\P1 (\C) &\rightarrow &\Etproj\\
& s &\mapsto & (x(s),y(s)).
\end{array}
$$
Proposition \ref{prop:parameterizationcompautoBIS} below gives such an explicit parametrization, which induces a bijection between $\P1 (\C) \setminus \phi^{-1}(\Omega)$ and $\Etproj \setminus \{\Omega\}$, where $\Omega=([0:1],[0:1])\in  \Etproj$. It is the analogue of \cite[Section 6.4.3]{FIM}, where the case $t=1$ is considered. The proof is similar for $t$ transcendental and the details are left to the reader. \par 

We first introduce some notations. For any $[x_0:x_1]$ and $[y_0:y_1]$ in $\P1(\C)$, we denote by $\Delta^x_{[x_0:x_1]}$ and $\Delta^y_{[y_0:y_1]}$ the discriminants of the degree two homogeneous polynomials given by $y \mapsto \overline{K}(x_0,x_1,y,t)$ and  $x \mapsto \overline{K}(x,y_0,y_1,t)$ respectively. We have 
\begin{multline}
\Delta^x_{[x_0:x_1]}=t^2 \Big( ( -  \frac{1}{t} x_0x_1 +d_{0,0}x_0x_1 + d_{1,0}x_0^2)^2  
 - 4d_{1,-1}x_0^2(d_{-1,1} x_1^2 + d_{0,1} x_0x_1 + d_{1,1}x_0^2) \Big)
 \nonumber
\end{multline}
and 
\begin{multline}
\Delta^y_{[y_0:y_1]}=t^2 \Big(-  \frac{1}{t} y_0y_1 +d_{0,0}y_0y_1+ d_{0,1}y_0^2)^2   
- 4d_{-1,1}y_0^2(d_{1,-1} y_1^2 + d_{1,0} y_0y_1 + d_{1,1}y_0^2) \Big).\nonumber
\end{multline}
Let us write 
$$
\Delta^x_{[x:1]}=\displaystyle \sum_{\ell=2}^{4}\alpha_{\ell}x^{\ell}
$$ 
and let $a_{1}=a_{2}=0,a_{3},a_{4}$ be the four roots of this polynomial. 
Similarly, let us write 
$$
\Delta^y_{[y:1]}=\displaystyle \sum_{\ell=2}^{4}\beta_{\ell}x^{\ell}
$$ 
and let $b_{1}=b_{2}=0,b_{3},b_{4}$ be the four roots of this polynomial. We have 
$$\begin{array}{lllllll}
\a_{2}(t)&=&1-2td_{0,0}+t^{2}d_{0,0}^{2}-4t^{2}d_{-1,1}d_{1,-1}& \ \ \  &\b_{2}(t)&=&1-2td_{0,0}+t^{2}d_{0,0}^{2}-4t^{2}d_{1,-1}d_{-1,1} \\
\a_{3}(t)&=&2t^{2}d_{1,0}d_{0,0}-2td_{1,0}-4t^{2}d_{0,1}d_{1,-1}& \ \ \  &\b_{3}(t)&=&2t^{2}d_{0,1}d_{0,0}-2td_{0,1}-4t^{2}d_{1,0}d_{-1,1}\\
\a_{4}(t)&=&t^{2}(d_{1,0}^{2}-4d_{1,1}d_{1,-1})& \ \ \  &\b_{4}(t)&=&t^{2}(d_{0,1}^{2}-4d_{1,1}d_{-1,1}).
\end{array}$$ 
Moreover, $a_{3}$, $a_{4}$, $b_{3}$ and $b_{4}$ are given by the following formulas 
$$
\begin{array}{|l|l|l|}\hline
&a_{3}&a_{4}\\[0.05in]\hline
\a_{4}(t)\neq 0&\left[\frac{-\a_{3}(t)-\sqrt{\a_{3}(t)^{2}-4\a_{2}(t)\a_{4}(t)}}{2\a_{4}(t)}:1\right]&\left[\frac{-\a_{3}(t)+\sqrt{\a_{3}(t)^{2}-4\a_{2}(t)\a_{4}(t)}}{2\a_{4}(t)}:1\right]
\\[0.05in]\hline
\a_{4}(t)= 0&[1:0] &[-\a_{2}(t):\a_{3}(t)]\\ \hline
&b_{3}&b_{4}\\[0.05in]\hline
\b_{4}(t)\neq 0&\left[\frac{-\b_{3}(t)-\sqrt{\b_{3}(t)^{2}-4\b_{2}(t)\b_{4}(t)}}{2\b_{4}(t)}:1\right]&\left[\frac{-\b_{3}(t)+\sqrt{\b_{3}(t)^{2}-4\b_{2}(t)\b_{4}(t)}}{2\b_{4}(t)}:1\right]
\\[0.05in]\hline
\b_{4}(t)= 0&[1:0] &[-\b_{2}(t):\b_{3}(t)]\\[0.05in] \hline
\end{array} $$

\begin{prop}\label{prop:parameterizationcompautoBIS}
An explicit parameterization $\phi= (x,y) : \P1 (\C) \rightarrow \Etproj$ is given by 
$$\phi(s) 
=\left(\dfrac{4\a_{2}(t)}{\sqrt{\a_{3}(t)^{2}-4\a_{2}(t)\a_{4}(t)}( s +\frac{1}{s}) -2\a_{3}(t)}, 
\dfrac{4\b_{2}(t)}{\sqrt{\b_{3}(t)^{2}-4\b_{2}(t)\b_{4}(t)}( \frac{s}{\lambda}+\frac{\lambda}{s}) -2\b_{3}(t)}\right)$$
for a certain $\lambda \in \C^{*}$.
Moreover we have (see Figure \ref{fig})
$$\begin{array}{lll}
x(0)=x(\infty)=a_{1},&x(1)=a_3,&x(-1)=a_4,\\
 y(0)=y(\infty)=b_1, &y(\lambda)=b_3, &y(-\lambda)=b_4,
\end{array} $$
where $a_1 = a_2 = [0:1]$  (resp. $b_1 = b_2 = [0:1]$). \end{prop}
\begin{figure}
\includegraphics[scale=0.4]{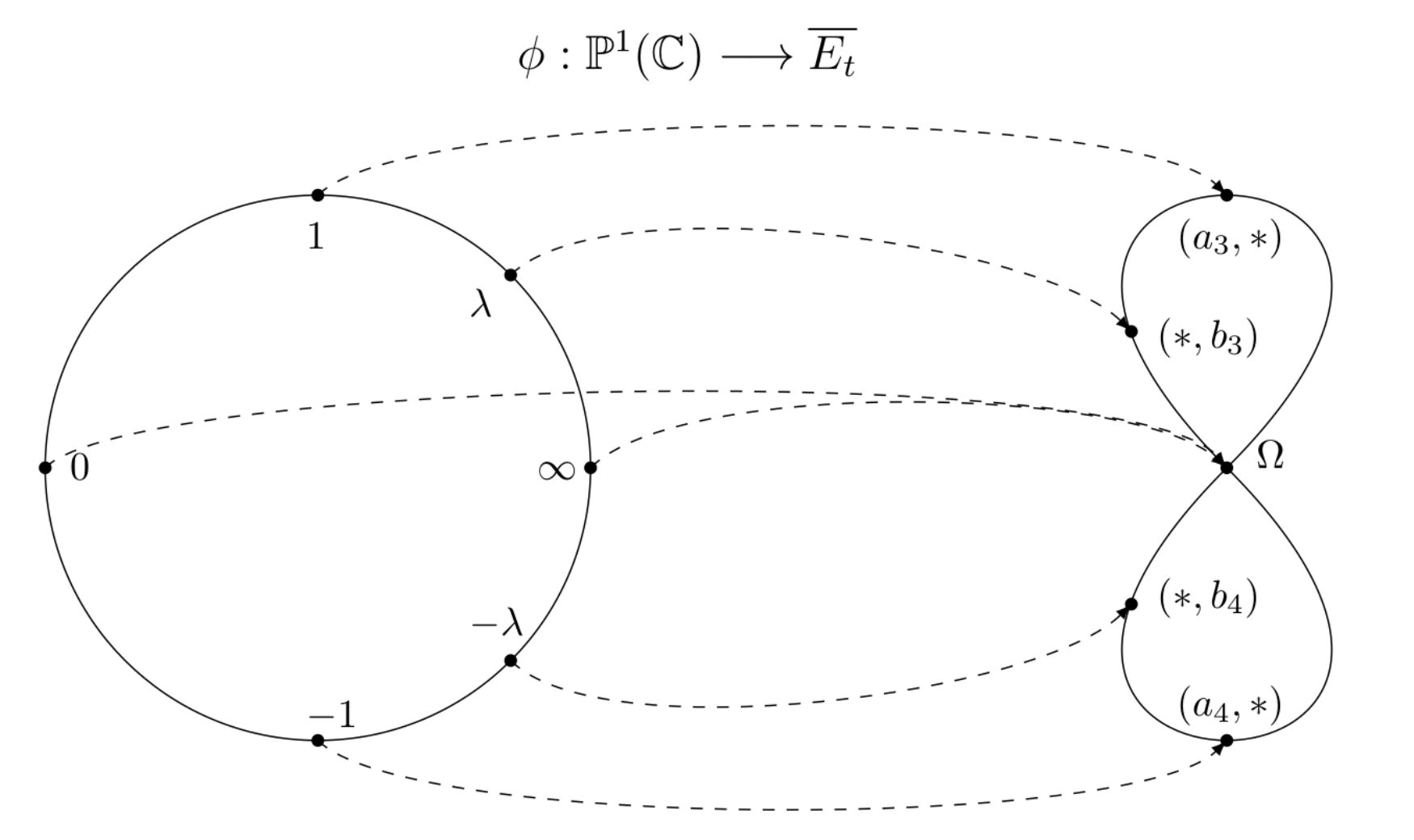}
\caption{The uniformization map}\label{fig}
\end{figure}

\begin{rem}
When $t=1$, we recover the uniformization of \cite[Section 6.4.3]{FIM}. Note that if we consider $x_3,x_4$ (resp. $y_3,y_4$) defined in \cite[Chapter 6]{FIM}, we have the equality of sets $\{a_3,a_4\}=\{x_3,x_4\}$ and $\{b_3,b_4\}=\{y_3,y_4\}$, but do not have necessarily $a_i=x_i$, $b_j=y_j$, with $3\leq i,j \leq 4$.
\end{rem}
 
The number
$$
q:=\lambda^{2}
$$ 
will be crucial in the rest of the paper. The following lemma determines $q$ up to its inverse.

\begin{prop}\label{lem:qBIS}
One of the two complex numbers  $\{q,q^{-1}\}$ is equal to 
\begin{equation}\label{eq1}
\dfrac{-1+d_{0,0}t-\sqrt{(1-d_{0,0}t)^{2}-4d_{1,-1}d_{-1,1}t^{2}}}{-1+d_{0,0}t+\sqrt{(1-d_{0,0}t)^{2}-4d_{1,-1}d_{-1,1}t^{2}}}.
\end{equation}
\end{prop}

\begin{proof}
Using the explicit formulas for $x(s)$ and $y(s)$, we get 
$$
\lim_{s\to 0}\frac{x(s)}{y(s)}=\dfrac{ \lambda\a_{2}(t)\sqrt{\b_{3}(t)^{2}-4\b_{2}(t)\b_{4}(t)}}{\b_{2}(t) \sqrt{\a_{3}(t)^{2}-4\a_{2}(t)\a_{4}(t)}}
\text{ and } 
\lim_{s\to 0}\frac{x(1/s)}{y(1/s)}=\dfrac{ \a_{2}(t)\sqrt{\b_{3}(t)^{2}-4\b_{2}(t)\b_{4}(t)}}{\lambda\b_{2}(t) \sqrt{\a_{3}(t)^{2}-4\a_{2}(t)\a_{4}(t)}}.
$$
But, $\frac{x(1/s)}{y(1/s)}=\frac{x(s)}{y(\iup_{1}(s))}$. So, the above two limits imply the following: 
$$
\lim_{s\to 0}\frac{y(\iup_{1}(s))}{y(s)}=q.
$$
Now, let us note that $y(s),y(\iup_{1}(s))$ equals to
$$
\dfrac{-x+d_{0,0}xt+d_{1,0}x^{2}t\pm\sqrt{(x-d_{0,0}xt-d_{1,0}x^{2}t)^{2}-4d_{1,-1}x^{2}t^{2}(d_{-1,1}+d_{0,1}x+d_{1,1}x^{2})}}{-2d_{-1,1}t-2d_{0,1}xt-2d_{1,1}x^{2}t},$$
with the shorthand notation $x=x(s)$. Since $x(s)$ tends to $0$ when $s$ goes to  $0$, we obtain the result.
\end{proof}

\begin{rem}
 One of the referees  remarked that for  the special case $d_{0,0}=0$, and $d_{1,-1}=d_{-1,1}=d$, the inverse of \eqref{eq1} becomes $$ 
\dfrac{-1+\sqrt{1-4d^{2}t^{2}}}{-1-\sqrt{1-4d^{2}t^{2}}}
=\dfrac{1-\sqrt{1-4d^{2}t^{2}}}{1+\sqrt{1-4d^{2}t^{2}}}=\dfrac{(1-\sqrt{1-4d^{2}t^{2}})^{2}}{4d^{2}t^{2}}= \frac{1-\sqrt{1-4d^{2}t^{2}}}{2d^{2}t^{2}}-1.$$
This expression is very similar to the generating series $\frac{1-\sqrt{1-4x}}{2x}$ of the Catalan numbers. Regrettably, we do not have, in general,  a combinatorial interpretation of $q$. 
\end{rem}

\begin{rem}
{ The uniformization is not unique.  More precisely, the possible uniformizations are of the form $\phi\circ h$, where $h$ is an homography. However, if one requires that $h$ fixes setwise $0,\infty$ then $q$ is uniquely defined up to its inverse.

 The real $q$ or $q^{-1}$ specializes for $t=1$ to the real $\rho^2$ in \cite[Page 178]{FIM}. In \cite[ (7.2.18) and Proposition 7.2.3]{FIM} it  is proved that the ratio of the argument of $\rho$  by $\pi$ is related to the angle between the tangent lines to the curve $\overline{E_1}$, the kernel curve at $t=1$, and the horizontal axis. This relation is obtained by a degeneracy argument from the genus $1$ case to the genus $0$ case.  More precisely, let  $\omega_3$ be  the period attached to the automorphism of the model of the walk in an  elliptic lattice $\Z\omega_1 + \Z \omega_2$ corresponding the elliptic kernel curve and  where $\omega_2$ is a  real period. Then, $\frac{\mathrm{arg}(\rho)}{\pi}$ is obtained by degeneracy of the fraction $\frac{\omega_3}{\omega_2}$ from the genus $1$ to the genus $0$ case. It is not completely obvious if these arguments pass to the situation where $t$ varies. In the zero drift situation, this has been done in \cite{fayolleRaschel}. In the general situation, it might be interesting to compute the rotation number $\frac{\omega_3(t)}{\omega_2(t)}$ of the real elliptic fibration (\cite[Page 82]{DuistQRT}) and to study its degeneracy. One could then expect that the ratio of the argument of $q$ by $2\pi$ is counting the number of rotations of the curve around the origin induced by the action of  the automorphism of the  model of the walk.}
\end{rem}
 
\begin{cor}\label{coro:qneqpm1}
We have $q\in \R\setminus \{\pm 1\}$.
\end{cor}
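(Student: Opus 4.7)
The plan is to exploit the explicit expression for $q$ (or $q^{-1}$) given in Proposition~\ref{lem:qBIS} and show that, under Assumption~\ref{assumption:oneofthefive}, the radicand is strictly positive and the resulting value is a positive real number different from $1$. Set
$$D := (1-d_{0,0}t)^{2}-4d_{1,-1}d_{-1,1}t^{2}.$$
The statement $q \in \R \setminus \{\pm 1\}$ is equivalent to the statement that the expression from Proposition~\ref{lem:qBIS} is a well-defined real number different from $\pm 1$ (since inversion preserves $\R \setminus \{\pm 1\}$).

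First, I would check that $D > 0$. Factoring,
$$D = \bigl(1 - (d_{0,0} + 2\sqrt{d_{1,-1}d_{-1,1}})\,t\bigr)\bigl(1 - (d_{0,0} - 2\sqrt{d_{1,-1}d_{-1,1}})\,t\bigr).$$
The second factor is clearly positive since $t \in (0,1)$ and $d_{0,0} - 2\sqrt{d_{1,-1}d_{-1,1}} \leq d_{0,0} \leq 1$. For the first factor, the AM--GM inequality gives $2\sqrt{d_{1,-1}d_{-1,1}} \leq d_{1,-1}+d_{-1,1}$, so
$$d_{0,0} + 2\sqrt{d_{1,-1}d_{-1,1}} \;\leq\; d_{0,0}+d_{1,-1}+d_{-1,1} \;\leq\; \sum_{i,j} d_{i,j} = 1,$$
and since $t<1$, the first factor is $> 0$ as well. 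Hence $D > 0$, and the expression in Proposition~\ref{lem:qBIS} is real, so $q \in \R$.

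Next, I would rule out $q = \pm 1$. Under Assumption~\ref{assumption:oneofthefive}, the five step sets in~\eqref{the five step set} all contain the steps $(-1,1)$ and $(1,-1)$, so $d_{-1,1}d_{1,-1} > 0$, forcing $\sqrt{D} < 1-d_{0,0}t$; in particular both numerator and denominator are nonzero (they are both negative since $d_{0,0}t < 1$), so the expression is a well-defined positive real number. Now the value equals $1$ iff $\sqrt{D}=0$, which is excluded by $D>0$; and it equals $-1$ iff $-1+d_{0,0}t = 0$, which is excluded since $d_{0,0} \leq 1$ and $t<1$ imply $d_{0,0}t < 1$. Therefore $q$ and $q^{-1}$ both lie in $\R\setminus\{\pm 1\}$, as claimed.

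The main obstacle is really just the inequality $D>0$; once this is established, the proof of $q\neq\pm 1$ is immediate. The AM--GM trick together with the normalization $\sum d_{i,j} = 1$ is the key point that makes $D>0$ robust across all five step sets.
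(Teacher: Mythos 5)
Your proof is correct and follows essentially the same route as the paper: establish that the radicand $(1-d_{0,0}t)^{2}-4d_{1,-1}d_{-1,1}t^{2}$ is strictly positive, then read off from Proposition~\ref{lem:qBIS} that the value equals $1$ only if the square root vanishes and equals $-1$ only if $-1+d_{0,0}t=0$, both impossible. The only (minor) difference is in the positivity step, where you factor the radicand and use AM--GM together with $\sum d_{i,j}=1$ and $t<1$, while the paper bounds $(1-d_{0,0}t)^{2}>(d_{1,-1}+d_{-1,1})^{2}\geq 4d_{1,-1}d_{-1,1}t^{2}$ using the strict inequality $1>d_{0,0}+d_{1,-1}+d_{-1,1}$ coming from nondegeneracy.
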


\begin{proof}
We first claim that $(1-d_{0,0}t)^{2}-4d_{1,-1}d_{-1,1}t^{2}>0$. We know that the $d_{i,j}$ are $\geq 0$, that the sum of the $d_{i,j}$ is equal to $1$ and that the  model is not included in $\{(0,0), (1,-1), (-1,1)\}$. Therefore, we have ${1>d_{0,0}+d_{1,-1}+d_{-1,1}}$, {\it i.e.},  $1-d_{0,0}>d_{1,-1}+d_{-1,1}$. Since $t\in ]0,1[$, we have $1-d_{0,0}t>1-d_{0,0}$. Thus, $(1-d_{0,0}t)^{2}> (1-d_{0,0})^{2}>(d_{1,-1}+d_{-1,1})^{2}$ and, hence, 
$$\begin{array}{lll}
(1-d_{0,0}t)^{2}-4d_{1,-1}d_{-1,1}t^{2}&>&(d_{1,-1}+d_{-1,1})^{2}-4d_{1,-1}d_{-1,1}t^{2} \\
& \geq &  (d_{1,-1}+d_{-1,1})^{2}-4d_{1,-1}d_{-1,1} = (d_{1,-1}-d_{-1,1})^{2}\geq 0.
\end{array} $$
This proves our claim. 

Now Proposition \ref{lem:qBIS} implies that $q$ is a real number $\neq 1$. Moreover, it also shows that $q= - 1$ if and only if $-1+d_{0,0}t=0$. But this is excluded because $1>d_{0,0}t$. 
\end{proof}

In particular, this implies that the birational maps $\sigma$ and $\tilde{\sigma}$ have infinite order (see also \cite{BMM,fayolleRaschel}). It follows that the group associated with these  models of walks, namely the group $\langle i_{1},i_{2} \rangle$ generated by $i_{1}$ and $i_{2}$, has infinite order (because  $\sigma$ is induced on $\Etproj$ by $i_{1} \circ i_{2}$, so if $\sigma$ has infinite order then $\langle i_{1},i_{2} \rangle$ has infinite order as well). Note that in \cite{BMM}, this was proved using a valuation argument. Using the valuation of the successive elements $(i_{1} \circ i_{2})^{\ell}(f)$ for $\ell\in \Z$ and $f\in \Q(x,y)$, it was proved that $i_{1} \circ i_{2}$ could not be of finite order. Initially, the group of the weighted  model was defined as a group of  birational transformations of $\C^2$, generated by two involutions. This is the group studied in \cite{BMM}. It is a finite group if and only if the automorphism of the weighted  model  $\sigma$ is of finite order.

\section{Analytic continuation  and differential transcendence criteria}\label{sec3} 

The aim of this section is to give differential transcendence criteria adapted to the study of the generating series of the  weighted models arising from \eqref{the five step set}. Let us describe our strategy. 
 In Lemma~\ref{lem:funceqn}, we defined the auxiliary series    $F^{1}(x,t):= -K(x,0,t)Q(x,0,t), \ \ F^{2}(y,t):= -K(0,y,t)Q(0,y,t)$. Since it is obvious that $Q(x,y,t)$ converges  for $|x| < 1, |y|< 1, |t|<1$, we have the same conclusion for these former series as well. Using the parameterization $\phi=(x,y):\P1 (\CX) \rightarrow \Etproj$ given in the previous section, we can pull back these functions to functions
$$\tilde{F}^{1}(s)=F^1(x(s),t) \text{ and } \tilde{F}^{2}(s)=F^2(y(s),t)$$ 
analytic in a neighborhood of $0$ in $\P1(\C)$. Using the functional equation (\ref{eq:funcequ}), we will prove that $\tilde{F}^{1}(s)$ and $\tilde{F}^{2}(s)$ each satisfy very simple $q$-difference equations
$$    \tilde{F}^{1}(qs) - \tilde{F}^{1}(s) = \tilde{b}_1 (s), \hspace{.2in} \tilde{F}^{2}(qs) - \tilde{F}^{2}(s) = \tilde{b}_2 (s) ,$$
for suitable $\tilde{b}_1, \tilde{b_2} \in \C(s)$. This implies in particular that $\tilde{F}^{1}(s)$ and $\tilde{F}^{2}(s)$ can be continued into meromorphic functions on all of $\C$. A result of Ishizaki, see \cite{Ishi}, implies that if either $\tilde{F}^{1}(s)$ or $\tilde{F}^{2}(s)$ are $s$-differentially algebraic then they must be in $\C(s)$ and results from the theory of linear $q$-difference equations allow us to detect this {\it via} the partial fraction decomposition of $\tilde{b}_1$ and $\tilde{b}_2$.  
In addition, we will show that $\tilde{F}^{1}(s)$ (resp.  $\tilde{F}^{2}(s)$) is $s$-differentially algebraic  if and only if   $Q(x,0,t)$ (resp. $Q(0,y,t)$) is $x$-differentially algebraic (resp. $y$-differentially algebraic).
We will therefore be able to reduce the question of whether $Q(x,0,t)$ (resp. $Q(0,y,t)$) is $x$-differentially algebraic (resp. $y$-differentially algebraic) to seeing if the above mentioned conditions on the partial fraction decomposition of $\tilde{b}_1$ and $\tilde{b}_2$ hold.  This will be done in Section~\ref{sec4} where we will see that the latter conditions never hold.  We now turn to supplying the details of this brief sketch.

In this section, we continue to assume that Assumption \ref{assumption:oneofthefive} holds true. 

\subsection{Functional equation}

We let 
$\phi = (x,y): \P1 (\C) \rightarrow \Etproj$ be the parameterization of $\Etproj$ given in Proposition~\ref{prop:parameterizationcompautoBIS}. Straightforward calculations show that
\begin{itemize}
\item $\phi(0)=\phi(\infty)=([0:1],[0:1])$;
\item $x(\iup_{1}(s))=x(s)$ where $\iup_1(s)=\frac{1}{s}$; 
\item $y(\iup_{2}(s))=y(s)$ where $\iup_2(s) = \frac{q}{s}=\frac{\lambda^{2}}{s}$;
\item $\tilde{\sigma}(s)=q s$ where $\tilde{\sigma} =\iup_2 \circ \iup_1$.
\end{itemize}

In particular, we have that $\tilde{\iota}_k\circ \phi = \phi\circ\iota_k$ and $\tilde{\sigma}\circ\phi = \phi\circ\sigma$ which will allow the following computations.

Recall the functional equation \eqref{eq:funcequ}: $$
 K(x,y,t)Q(x,y,t)=xy-F^{1}(x,t) - F^{2}(y,t)+td_{-1,-1} Q(0,0,t).
$$
This equation is a formal identity but for  $|x| < 1$ and $|y| <1$, the series $Q(x,y,t)$, $F^{1}(x,t)$ and $F^{2}(y,t)$ are convergent.  Using our parameterization of $\Etproj$, we will show how we can pull back these convergent series and analytically continue them to meromorphic functions on  $\CX$ satisfying simple $q$-difference equations. 

The set $V = \{([x:1],[y:1]) \in \Etproj \ \vert \ |x|, |y| <1 \}$ is an open neighborhood of $([0:1],[0:1])$ in $\Etproj$ for the analytic topology, and, for all $(x,y) \in V$, we have 
\begin{equation}\label{eq:funcequaonthecurve2}
0=xy -F^{1}(x,t)  -F^{2}(y,t)+td_{-1,-1} Q(0,0,t).
\end{equation}

Since $\phi(0)=\phi(\infty)=([0:1],[0:1])$, there exists $U \subset \P1(\C)$ which is the union of two small open discs centered at $0$ and $\infty$ such that $\phi(U) \subset V$. 

For any $s \in U$, we set $\breve{F}^{1}(s)=F^{1}(x(s),t)$ and $\breve{F}^{2}(s)=F^{2}(y(s),t)$. Then, $\breve{F}^{1}$ and $\breve{F}^{2}$ are meromorphic functions over $U$ and 
\eqref{eq:funcequaonthecurve2} yields, for all $s \in U$,  
\begin{equation}\label{eq:funcequaonthecurve3}
0=x(s)y(s)-\breve{F}^{1}(s)  -\breve{F}^{2}(s)+td_{-1,-1} Q(0,0,t).
\end{equation}

Replacing $s$ by $\iup_2(s)$ in \eqref{eq:funcequaonthecurve3}, we obtain, for all $s$ close to $0$ or $\infty$, (in what follows, we use $x(\iup_1(s))=x(s)$, $y(\iup_2(s))=y(s)$, $\breve{F}^{1}(\iup_1(s))=\breve{F}^{1}(s)$ and $\breve{F}^{2}(\iup_2(s))=\breve{F}^{2}(s)$) 
\begin{align}
0 &= x(\iup_2(s))y(\iup_2(s)) - \breve{F}^{1}(\iup_2(s)) - \breve{F}^{2}(\iup_2(s))+td_{-1,-1} Q(0,0,t) \nonumber \\
 &= x(\iup_{1}(\iup_2(s)))y(s)  - \breve{F}^{1}(\iup_{1}(\iup_2(s))) - \breve{F}^{2}(s)+td_{-1,-1} Q(0,0,t) \nonumber \\
& =   x(q^{-1}s)y(s)  - \breve{F}^{1}(q^{-1}s) - \breve{F}^{2}(s)+td_{-1,-1} Q(0,0,t). \label{eqn:equgroupaction} 
\end{align}
 Subtracting \eqref{eq:funcequaonthecurve3} from \eqref{eqn:equgroupaction}, and then replacing $s$ by $qs$, we obtain, for all $s$ close to $0$ or $\infty$, 
\begin{align}
 \breve{F}^1(qs)- \breve{F}^1(s) & =  {(x(qs)-x(s))y(qs).} \label{eq:firstfuncequation} 
\end{align}
\begin{rem}
If we set $t=1$ and replace $\breve{F}^{1}$ by $\frac{-\breve{F}^{1}}{K(0,y,t)}$, then a similar argument leads to another functional equation which is the one given in \cite[Theorem 6.4.1]{FIM}.
\end{rem}
Similarly, replacing $s$ by $\iup_1(s)$ in \eqref{eq:funcequaonthecurve3}, we obtain, for all $s$ close to $0$ or $\infty$, 
\begin{align}
0 &= x(\iup_1(s))y(\iup_1(s))  - \breve{F}^{1}(\iup_1(s)) - \breve{F}^{2}(\iup_1(s))+td_{-1,-1} Q(0,0,t) \nonumber \\
& =  x(s)y(\iup_{2}(\iup_1(s)))  - \breve{F}^{1}(s) - \breve{F}^{2}(\iup_{2}(\iup_{1}(s)))+td_{-1,-1} Q(0,0,t) \nonumber \\
& =  x(s)y(qs)  - \breve{F}^{1}(s) - \breve{F}^{2}(qs)+td_{-1,-1} Q(0,0,t). \label{eqn:equgroupaction2} 
\end{align}
 Subtracting  \eqref{eqn:equgroupaction2} from \eqref{eq:funcequaonthecurve3}, we obtain, for all $s$ close to $0$ or $\infty$, 
\begin{align}
 \breve{F}^2(qs)- \breve{F}^2(s)  = {x(s)(y(qs)-y(s))} . \label{eq:secondfuncequation} 
\end{align}

We let $\widetilde{F}^{1}$ and $\widetilde{F}^{2}$ be the restrictions of $\breve{F}^{1}$ and $\breve{F}^{2}$ to a small disc around $0$. They satisfy the functional equations \eqref{eq:firstfuncequation} and \eqref{eq:secondfuncequation} for $s$ close to $0$. 
Since $|q|\notin\{ 0,1\}$, this implies that each  of the functions $\widetilde{F}^{1}$ and $\widetilde{F}^{2}$ can be continued to a meromorphic function on $\C$ with \eqref{eq:firstfuncequation} satisfied for all $s\in \C$. 
Note that there is a priori no reason why, in the neighborhood of $\infty$, these functions should coincide with the original functions $\breve{F}^1$ and $\breve{F}^2$. 

\subsection{Application to differential transcendence}

In this subsection, we derive differential transcendency criteria for $x\mapsto Q(x,0,t)$ and $y\mapsto Q(0,y,t)$.  They are based on the fact that the related functions $\widetilde{F}^{1}$ and $\widetilde{F}^{2}$ satisfy  difference equations.

\begin{defi} Let $(E,\delta)\subset (F,\delta)$ be differential fields, that is, fields equipped with a map $\delta$ that satisfies $\delta (a+b)=\delta (a)+\delta (b)$ \text{and} $\delta (ab)=a\delta(b)+\delta(a)b$. 
We say that $f\in F$ is {\rm  differentially algebraic} over $E$ if it satisfies a non trivial algebraic differential equation with coefficients in $E$, {\it i.e.}, if for some $m$ there exists a nonzero polynomial ${P(y_0, \ldots , y_m) \in E[y_0, \ldots , y_m]}$ such that
$$
P(f,\delta(f), \ldots, \delta^m(f)) = 0.
$$
We say that $f$ is {\rm holonomic} over $E$ if in addition, the polynomial is linear. We say that  $f$  is {\rm differentially transcendental} over $E$ if it is not differentially algebraic.  
\end{defi}

\begin{prop}\label{prop:hyper}
The series $x\mapsto Q(x,0,t)$ is differentially algebraic over  $(\C(x), \frac{d}{dx})$ if and only if $\widetilde{F}^1$ is differentially algebraic over $(\C(s), \frac{d}{ds})$. The series $y\mapsto Q(0,y,t)$ is differentially algebraic over  $(\C(y), \frac{d}{dy})$ if and only if $\widetilde{F}^2$ is differentially algebraic over $(\C(s), \frac{d}{ds})$.  
\end{prop}

\begin{proof}  
This follows from Lemmas 6.3 and 6.4 of \cite{DHRS}, since we go from $x\mapsto Q(x,0,t)$ to $\widetilde{F}^1$ by a variable change which is algebraic (and therefore differentially algebraic). The proof for $\widetilde{F}^2$ is similar.  \end{proof}

Consequently, we only need to study $\widetilde{F}^1$ and $\widetilde{F}^2$. 
Recall that  they belong to the field  $\calM er(\C)$ of meromorphic functions on $\C$. Using a result due to Ishizaki \cite[Theorem 1.2]{Ishi} (see also \cite[Proposition 3.5]{HS}, where a Galoisian proof of Ishizaki's result is given), we get, for any $i\in \{1,2\}$, the following dichotomy\footnote{  Ishizaki's proof of his result proceeds by comparing behavior at various poles and uses growth results from Wiman-Valiron Theory. The approach  of \cite{HS} avoids the growth considerations and is more algebraic. A slightly weaker result, in the spirit of the considerations of \cite{DHRS}, would suffice to establish this dichotomy, see \cite[Corollary 3.2, Proposition 6.4]{HS} or \cite{Hard}.}:
\begin{itemize}
\item either $\widetilde{F}^i \in \C(s)$, or
\item  $\widetilde{F}^i $ is differentially transcendental over $\C(s)$.
\end{itemize}
\begin{rem}
1. Note that the fact that $\widetilde{F}^i$ is meromorphic on $\C$ is essential. For instance, if $q>1$, the Theta function $\theta_{q}(s)=\sum_{n\in\Z}{q}^{-n(n-1)/2}s^n$ is meromorphic on $\C^{*}$, is not rational and is differentially algebraic as it is shown for instance in \cite[Corollary 3.4]{HS}.\\
2. Combining  Ishizaki's dichotomy with the result of Mishna and Rechnitzer \cite{MR09}, and the result of Melczer and Mishna \cite{MelcMish},
on the non holonomy of the 
complete generating series of the unweighted  genus zero walks, one finds that these complete generating series are differentially transcendental, thus proving directly Theorem \ref{theo:maintheotransc} in the five unweighted cases. 
\end{rem}
So, we need to understand when $\widetilde{F}^i \in \C(s)$. We set 
$$
\widetilde{b}_{1}(s)=y(qs)( x(qs)-x(s)) \text{ and } \widetilde{b}_{2}(s)=x(s)(y(qs)-y(s)),
$$
so that the functional equations \eqref{eq:firstfuncequation} and \eqref{eq:secondfuncequation} can be restated as  
\begin{equation}\label{eq:eqfuncwithb}
\widetilde{F}^1(qs)- \widetilde{F}^1(s) = \widetilde{b}_{1}(s) \text{ and } \widetilde{F}^2(qs)- \widetilde{F}^2(s)=\widetilde{b}_{2}(s)
\end{equation}
for $s\in \C$. 

 \begin{lem}\label{lem6}\label{lem:equiv1}
For any $i \in \{1,2\}$, the following facts are equivalent:
\begin{itemize}
\item $\widetilde{F}^i \in \C(s)$;
\item there exists $f_{i}\in \C(s)$ such that $\widetilde{b}_{i}(s)=f_{i}(qs)-f_{i}(s)$.
\end{itemize}
 \end{lem}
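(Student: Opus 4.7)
The implication ($\widetilde{F}^i \in \C(s)$) $\Rightarrow$ (existence of $f_i$) is trivial: one simply takes $f_i = \widetilde{F}^i$, and the functional equation \eqref{eq:eqfuncwithb} gives the required decomposition $\widetilde{b}_i(s)=f_i(qs)-f_i(s)$.

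For the converse, suppose $\widetilde{b}_i(s) = f_i(qs)-f_i(s)$ for some $f_i \in \C(s)$. I would introduce the auxiliary function
$$ g_i := \widetilde{F}^i - f_i. $$
Since $\widetilde{F}^i$ is meromorphic on $\C$ and $f_i \in \C(s)$ is meromorphic on $\C$, so is $g_i$. Subtracting the two relations $\widetilde{F}^i(qs)-\widetilde{F}^i(s)=\widetilde{b}_i(s)$ and $f_i(qs)-f_i(s)=\widetilde{b}_i(s)$ shows that $g_i$ is $q$-invariant, i.e.
$$ g_i(qs)=g_i(s) \quad \text{for all } s \in \C. $$
The entire proof thus reduces to showing that any meromorphic function on $\C$ invariant under $s \mapsto qs$ must be constant; then $\widetilde{F}^i = f_i + g_i \in \C(s)$.

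To establish this key claim I would expand $g_i$ in a Laurent series at the origin: since $g_i$ is meromorphic at $0$, there exists $N \in \Z$ and $(a_n)_{n \geq N} \subset \C$ such that $g_i(s) = \sum_{n \geq N} a_n s^n$ in some punctured neighborhood of $0$. The invariance $g_i(qs)=g_i(s)$ forces $a_n(q^n-1)=0$ for every $n \geq N$. By Proposition~\ref{lem:q}, $q \in \R \setminus \{\pm 1\}$, so $q^n \neq 1$ for every nonzero integer $n$. Hence $a_n = 0$ for $n \neq 0$, meaning $g_i$ coincides with the constant $a_0$ in a punctured neighborhood of $0$. By the identity principle for meromorphic functions on the connected set $\C$, we conclude $g_i \equiv a_0$, and thus $\widetilde{F}^i = f_i + a_0 \in \C(s)$.

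There is no real obstacle: the only point requiring minor care is that $g_i$ be meromorphic \emph{at the origin} (not merely on $\C^*$), which is what allows the Laurent expansion argument to force constancy; without this, $q$-invariant meromorphic functions on $\C^*$ descend to the elliptic curve $\C^*/q^{\Z}$ and need not be constant. This is ensured here because $f_i \in \C(s)$ and $\widetilde{F}^i$ is meromorphic on all of $\C$, hence in particular at $s=0$.
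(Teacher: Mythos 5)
Your proof is correct and follows essentially the same route as the paper: take $f_i=\widetilde{F}^i$ for the easy direction, and for the converse observe that $\widetilde{F}^i-f_i$ is $q$-invariant and meromorphic on $\C$, expand it in a Laurent series at $s=0$, and use $q^n\neq 1$ for $n\neq 0$ (since $q\in\R\setminus\{\pm 1\}$) to force it to be constant. Your explicit mention of the identity principle and of meromorphy at the origin merely spells out details the paper leaves implicit.
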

 
\begin{proof}
If $\widetilde{F}^i \in \C(s)$ then \eqref{eq:eqfuncwithb} shows that $\widetilde{b}_{i}(s)=f_{i}(qs)-f_{i}(s)$ with $f_{i}=\widetilde{F}^i \in \C(s)$. Conversely, assume that there exists $f_{i}\in \C(s)$ such that $\widetilde{b}_{i}(s)=f_{i}(qs)-f_{i}(s)$. Using \eqref{eq:eqfuncwithb} again, we find that $(\widetilde{F}^i-f_{i})(s)=(\widetilde{F}^i-f_{i})(qs)$. 
Since the function $\widetilde{F}^i-f_{i}$ is meromorphic over $\C$,  we may expand it as a Laurent series at $s=0$: $\widetilde{F}^i-f_{i}=\sum_{\ell \geq \ell_0} a_{\ell}s^{\ell}$.  We then have $\sum_{\ell \geq \ell_0} a_{\ell}s^{\ell}=\sum_{\ell \geq \ell_0} a_{\ell}q^{\ell}s^{\ell}$ and since $q$ is not a root of unity, $\widetilde{F}^i-f_{i}\in \C$.
This ensures that $\widetilde{F}^i \in \C(s)$. 
\end{proof}

\begin{rem}\label{rem:decouplingfunction}
  In  \cite{BBMR17}, the authors introduce the  notion of decoupling functions, that is of functions $F(x) \in \Q(x,t)$ and $G(y) \in \Q(y,t)$ such that $xy= F(x)+ G(y)$
for $x, y$ satisfying $K(x,y,t) = 0$. It is easily seen that if $F$ and $G$ are decoupling functions, one has $$ \iota_2(xy)-xy=\iota_2(F(x))-F(x) \mbox { and } \iota_1(xy) -xy =\iota_1 (G(y))-G(y),$$ 
when  $K(x,y,t)=0$.  In our genus zero situation, composing the former identities with the uniformization yields  $\widetilde{b}_{i}(s)=f_{i}(qs)-f_{i}(s)$ where $f_1(s)=F(x(s)) \in \C(s)$ and ${f_2(s)= G(y(s)) \in \C(s)}$. Then, Lemma \ref{lem:equiv1} is essentially the same kind of results as \cite[Lemma 2]{BBMR17} but in the easier framework of a genus zero kernel curve.
\end{rem}

The following lemma is a consequence of the functional equation satisfied by $\widetilde{F}^1,\widetilde{F}^2$. See \cite[Corollary 3.2.5]{FIM}, or \cite[Proposition 3.10]{DHRS}, for similar results in the genus one case.
\begin{lem}\label{lem:equiv2}
The following properties are equivalent: 
\begin{itemize}
\item  $\widetilde{F}^1 \in \C(s)$;
\item  $\widetilde{F}^2 \in \C(s)$.
\end{itemize}
\end{lem}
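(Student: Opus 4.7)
The plan is to deduce the equivalence directly from the functional equation \eqref{eq:funcequaonthecurve3} pulled back to $\P1(\C)$, which asserts
$$
\widetilde{F}^{1}(s) + \widetilde{F}^{2}(s) \;=\; x(s)y(s) + t d_{-1,-1} Q(0,0,t)
$$
for $s$ in a small disc around $0$. Since $x(s)y(s) \in \C(s)$, the right-hand side is a rational, hence meromorphic, function on all of $\C$.

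First I would recall that $\widetilde{F}^{1}$ and $\widetilde{F}^{2}$ have been constructed as meromorphic functions on the whole of $\C$ by iterating the $q$-difference equations \eqref{eq:firstfuncequation} and \eqref{eq:secondfuncequation} (this uses $|q|\notin\{0,1\}$, already established via Proposition~\ref{lem:q}). Next, the identity above is an equality between meromorphic functions on $\C$ that holds on a nonempty open subset of $\C$; by the identity principle it therefore holds globally on $\C$. Consequently,
$$
\widetilde{F}^{2}(s) = x(s)y(s) + t d_{-1,-1} Q(0,0,t) - \widetilde{F}^{1}(s)
$$
as meromorphic functions on $\C$, and symmetrically. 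Since $x(s)y(s) \in \C(s)$, one of $\widetilde{F}^{1}, \widetilde{F}^{2}$ lies in $\C(s)$ if and only if the other does, which is exactly the equivalence to be proved.

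There is essentially no obstacle here: the only subtlety is to justify that the identity \eqref{eq:funcequaonthecurve3}, which a priori holds only on the neighborhood $U$ where $\breve{F}^{1}, \breve{F}^{2}$ coincide with the convergent pullbacks of $F^{1}(x,t), F^{2}(y,t)$, propagates to the globally defined meromorphic continuations $\widetilde{F}^{1}, \widetilde{F}^{2}$. This is immediate from analytic continuation since $\widetilde{F}^{i}$ agrees with $\breve{F}^{i}$ on a disc around $0$ (no functional-equation manipulation is even required for this step). The proof is therefore short and entirely of the same flavor as the genus one statements \cite[Corollary 3.2.5]{FIM} and \cite[Proposition 3.10]{DHRS} cited in the lemma.
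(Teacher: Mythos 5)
Your proof is correct, but it follows a genuinely different route from the paper's. The paper deduces the lemma from Lemma~\ref{lem:equiv1} (the rationality/$q$-summability criterion): it uses the telescoping identity $\widetilde{b}_{1}(s)+\widetilde{b}_{2}(s)=(xy)(qs)-(xy)(s)$ --- which is exactly the $q$-difference of your identity --- so that $\widetilde{b}_{1}$ is $q$-summable if and only if $\widetilde{b}_{2}$ is, with certificates related by $f_{2}=xy-f_{1}$, and then applies Lemma~\ref{lem:equiv1} in both directions. You instead work directly at the level of the functions: the relation $\widetilde{F}^{1}(s)+\widetilde{F}^{2}(s)=x(s)y(s)+td_{-1,-1}Q(0,0,t)$ holds on a disc around $0$ by \eqref{eq:funcequaonthecurve3} (where $\widetilde{F}^{i}$ coincides with $\breve{F}^{i}$ by construction), and since all terms are meromorphic on $\C$ and $x(s)y(s)\in\C(s)$, the identity principle propagates it to all of $\C$, after which the equivalence is immediate. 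Your argument buys directness --- it bypasses Lemma~\ref{lem:equiv1} entirely and in fact yields the slightly stronger statement that $\widetilde{F}^{1}+\widetilde{F}^{2}$ is itself rational --- at the cost of an explicit analytic-continuation step; the paper's argument stays within the algebraic $q$-summability framework it has already set up and reuses in Theorem~\ref{thm:equivconditions}. You were also right to invoke only the coincidence of $\widetilde{F}^{i}$ with $\breve{F}^{i}$ near $0$ and not near $\infty$ (where, as the paper warns, they need not agree), so there is no gap.
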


\begin{proof}
Assume that $\widetilde{F}^1 \in \C(s)$.   Lemma~\ref{lem6} states that  there exists $f_{1}\in \C(s)$ such that ${\widetilde{b}_{1}(s)=f_{1}(qs)-f_{1}(s)}$. Note that  $\widetilde{b}_{1}(s)+\widetilde{b}_{2}(s)=(xy)(qs)-(xy)(s)$, so that we  have ${\widetilde{b}_{2}(s)=f_{2}(qs)-f_{2}(s)}$, with $xy(s)-f_{1}(s)=f_{2}(s)\in \C(s)$.    Lemma~\ref{lem6} implies that  $\widetilde{F}^2 \in \C(s)$. The converse is proved in   a similar way.
\end{proof}

\begin{thm}\label{thm:equivconditions}
The following properties are equivalent:
\begin{enumerate}
\item The series $Q(x,0,t)$ is differentially algebraic over $\C(x)$;\label{equivconditions1}
\item The series $ Q(x,0,t)$ is algebraic over $\C(x)$;\label{equivconditions2}
\item The series $Q(0,y,t)$ is differentially algebraic over $\C(y)$;\label{equivconditions3}
\item The series $Q(0,y,t)$ is algebraic over $\C(y)$;\label{equivconditions4}
\item There exists $f_{1}\in \C(s)$ such that $\widetilde{b}_{1}(s)=f_{1}(qs)-f_{1}(s)$; \label{equivconditions5}
\item There exists $f_{2}\in \C(s)$ such that $\widetilde{b}_{2}(s)=f_{2}(qs)-f_{2}(s)$. \label{equivconditions6}
\end{enumerate}
\end{thm}

\begin{proof} 
Assume that \eqref{equivconditions1} holds true. Proposition~\ref{prop:hyper} implies that $\widetilde{F}^{1}$ is differentially algebraic over $\C(s)$. Ishizaki's Theorem ensures that $\widetilde{F}^{1} \in \C(s)$. But $x : \P1(\C) \rightarrow \P1(\C)$ is locally (for the analytic topology) invertible at all but finitely many points of $\P1(\C)$ and the corresponding local inverses are algebraic over $\C(x)$. It follows that  $F^{1}(\cdot,t)$ can be expressed as a rational expression, with coefficients in $\C$, of an algebraic function, and, hence, is algebraic over $\C(x)$. Hence \eqref{equivconditions2} is satisfied. The fact that \eqref{equivconditions2} implies \eqref{equivconditions1} is obvious. 
The fact that \eqref{equivconditions3} is equivalent to \eqref{equivconditions4} can be shown in a similar manner to the equivalence of \eqref{equivconditions1} and \eqref{equivconditions2}. 
The fact that \eqref{equivconditions1} to \eqref{equivconditions4} are equivalent now follows from Lemma \ref{lem:equiv2} combined with \cite[Theorem 1.2]{Ishi}. 
The remaining equivalences follow from Lemma \ref{lem:equiv1}.
\end{proof}

So, to decide whether  $Q(x,0,t)$,  $Q(0,y,t)$ are differentially transcendental, we are led to the following problem:\\ \par 
\emph{Given $b \in \C(s)$, decide whether there exists $f\in \C(s)$ such that $b(s)=f(qs)-f(s)$.} \\\par 
  When such an $f$ exists, we say that $b$ is $q$-summable in $\C(s)$.  This problem is known as a $q$-summation problem and has been solved  by Abramov \cite{Abramov}.  This procedure  was recast in \cite{ChenSinger} in terms of 
 the so-called $q$-residues of $b$, which we now define.
 
  We begin by defining the {\it $q$-orbit} of $\beta \in \C^*$ to be $\beta q^{\Z} = \{ \beta\cdot  q^i \ | \ i \in \Z\}$. Given a rational function $b(s) \in \C(s)$ we may rewrite its partial fraction decomposition uniquely as
 \begin{equation}\label{eq:parfrac}b(s) = c + sp_1 + \frac{p_2}{s^r} + \sum_{i=1}^m \sum_{j=1}^{n_i} \sum_{\ell = 0}^{r_{i,j} }\frac{\alpha_{i,j,\ell}}{(s-q^\ell \cdot \beta_i)^j},
 \end{equation}
 where $ c \in \C, p_1,p_2 \in \C[s], m, n_i \in \Z_{\geq 0}$ are nonzero, $r, r_{i,j} \in \Z_{\geq 0}, \alpha_{i,j,\ell}, \beta_i\in \C$ and the $\beta_i$'s are nonzero and in distinct $q$-orbits.
 
 \begin{defi} (cf. \cite[Definition 2.7]{ChenSinger})  Let $b \in \C(s)$ be of the form (\ref{eq:parfrac}).  The sum 
 \[\sum_{\ell = 0}^{r_{i,j}} q^{-\ell\cdot j} \alpha_{i,j,\ell}\]
 is called the {\rm $q$-residue of b at the $q$-orbit of $\beta_i$ of multiplicity $j$} (this is called the {\it $q$-discrete residue} in \cite{ChenSinger}) and is denoted by $\mathrm{qres}(b, \beta_i,j)$. In addition, we call the constant $c$ the {\rm $q$-residue of $b$ at infinity} and denote it by $\mathrm{qres}(b, \infty)$.\end{defi}
 
 \begin{exa} Let $q = 2$ and 
 \[b(s) = 1 + s + \frac{s+2}{s^2} + \frac{3}{(s-1)^2} -\frac{12}{(s-2)^2} + \frac{1}{s-5}.\]
 We have $\mathrm{qres}(b, \infty) = 1$, $\mathrm{qres}(b, 1,2) = 2^0\cdot 3 + 2^{-1\cdot 2} (-12) = 0$, and $\mathrm{qres}(b, 5,1)= 1$.  All other $q$-residues are $0$. 
 \end{exa}
 
 One has the following criterion for $q$-summability.

 \begin{prop}(c.f. \cite[Proposition 2.10]{ChenSinger}) Let $b = f/g \in \C(x)$ be such that $f,g \in \C[x]$ with $\mathrm{gcd}(f,g) = 1$. Then $b$ is $q$-summable in $\C(s)$ if and only if the $q$-residues $\mathrm{qres}(b,\infty) = 0$ and $\mathrm{qres}(b, \beta, j) = 0$ for any multiplicity $j$ and any  $\beta \neq 0$ with $g(\beta) = 0, g(q^{\ell}\beta) \neq 0$ for every $\ell <0$.
 \end{prop}

 Applying this criteria to the above example we see that $b$ is not $q$-summable because $\mathrm{qres}(b, \infty) \neq 0$ as well as $\mathrm{qres}(b, 5,1)\neq 0$.  In fact, whenever an element $b\in \C(x)$ has a pole of order $m\geq 1$ at a point $\beta$ and no other pole of order $\geq m$ in the $q$-orbit of $\beta$, then a $q$-residue of multiplicity $m$ will be nonzero. We therefore have the following corollary (also a consequence of  results in \cite{Abramov}) which plays a crucial role  in the next section.
\begin{cor}\label{lem:ChenSinger} If $\beta  \in \C^{*}$  is a pole of $b\in \C(x)$ of order  $m \geq 1$ and if $b$ has no other pole of order $\geq m$ in the $q$-orbit of $\beta$, then $b$ is not $q$-summable, {\it i.e.}, there is no $f(s) \in \C(s)$ such that $b(s)=f(qs)-f(s)$.  \end{cor}

Using the parameterization $\phi:\P1 (\C) \rightarrow \Etproj$, we can translate this to give a criterion for the differential transcendence of 
$x\mapsto Q(x,0,t)$ and  $y\mapsto Q(0,y,t)$  over $\C(x)$ and $\C(y)$ respectively. We set (see Section~\ref{sec1} for notations)
$$
\mathbf{b}_{1}=\iota_{1}(y)(\iota_{2}(x)-x) \text{ and } \mathbf{b}_{2}=x(\iota_{1}(y)-y),
$$
so that we have 
$$
\widetilde{b}_{1}=\mathbf{b}_{1}\circ \phi \text{ and } \widetilde{b}_{2}=\mathbf{b}_{2} \circ \phi.
$$

\begin{prop} \label{prop:criteria}
We suppose that Assumption~\ref{assumption:oneofthefive} holds true and recall that $|q|\neq 1$. Let $b \in \C(x,y)$ be  a rational function on   $\Etproj$. Assume that $P \in \Etproj \setminus \{\Omega\}$ is a pole of $b$ of order $m \geq 1$ such that none of the $\sigma^i(P)$ with $i \in \Z \backslash \{0\}$ is a pole of $b$ of order $\geq m$, then $$b = \sigma(g) - g$$ has no solution $g \in \C(x,y)$ which restricts to a rational function on $\Etproj$.

In particular, if $\mathbf{b}_{2} = x(\iota_1(y)-y )$ satisfies this condition, then $x\mapsto Q(x,0,t)$,  (resp. $y\mapsto Q(0,y,t)$) is differentially transcendental over $\C(x)$ (resp. differentially transcendental over $\C(y)$). 
\end{prop}

\begin{proof} We know that the parameterization $\phi =(x,y):\P1 (\C)\rightarrow \Etproj$ that we have constructed, induces an isomorphism between $\P1(\C) \setminus \{0,\infty\}$ and $\Etproj \setminus \{\Omega\}$.  
If $s_0 \in \P1 (\C)\setminus \{0,\infty\}$ is such that $\phi(s_{0})=P$, then $s_{0}$ is a pole of order $m \geq 1$ of $b \circ \phi$ such that none of the $\tilde{\sigma}^i(s_{0})  = q^i{s}$ with $i \in \Z \backslash \{0\}$ is a pole of $b \circ \phi$ of order $\geq m$. 
If $g \in \C(x,y)$ restricts to a rational function on $\Etproj$ and satisfies $b = \sigma(g) - g$, then $f = g \circ \phi$ would satisfy $b(s)=f(qs)-f(s)$ contradicting Lemma~\ref{lem:ChenSinger}.

If $\mathbf{b}_{2} = x(\iota_1(y)-y)$ satisfies the condition of the Proposition, then $\mathbf{b}_{2} = \sigma(g) -g$ has no solution $g$ that is a rational function on $\Etproj$. Pulling this back to $\P1 (\C)$, we see that for $\widetilde{b}_2(s) = \mathbf{b}_{2} \circ \phi (s) = x(s)(y(1/s)-y(s))$, the equation $\widetilde{b}_2(s) = f(qs) - f(s)$ has no solution in $\C(s)$. Theorem~\ref{thm:equivconditions} yields our conclusion.\end{proof}

 Finally we note that given a fixed family of probabilities $(d_{i,j})$, the algorithms \cite{Abramov,ChenSinger} permit us to decide if the generating series is differentially algebraic or not. In Section~\ref{sec4}, we will prove an unconditional statement, that is, for every set of probabilities $d_{i,j}$, the generating series is differentially transcendental. Note that this kind of result may a priori not be obtained via the above mentioned algorithms, since the generating series depends on parameters (the probabilities $d_{i,j}$) and it is not clear how to make the algorithms give information about arbitrary specializations of the parameters.
 
\section{Differential transcendence: main result}\label{sec4}
 
In this section, we will prove the main result of this paper:  
 
\begin{thm}\label{theo:maintheotransc}
We suppose that Assumption~\ref{assumption:oneofthefive} is satisfied. Then, the functions $x\mapsto Q(x,0,t)$ and $y\mapsto Q(0,y,t)$ are differentially transcendental over $\C(x)$ and $\C(y)$ respectively.
\end{thm}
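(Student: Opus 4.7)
The plan is to apply Proposition~\ref{prop:criteria} to $\mathbf{b}_2 = x(\iota_1(y)-y)$: it suffices to exhibit a point $P \in \Etproj\setminus\{\Omega\}$ that is a pole of $\mathbf{b}_2$ of some order $m\geq 1$ such that no $\sigma^i(P)$, $i\neq 0$, is a pole of order $\geq m$. By Proposition~\ref{prop:criteria} this immediately yields the differential transcendence of both $Q(x,0,t)$ and $Q(0,y,t)$, so Theorem~\ref{theo:maintheotransc} is reduced to locating one such ``isolated'' pole.

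First I would locate the poles of $\mathbf{b}_2$. Rewriting the Kernel as a quadratic in $y$ gives $y + \iota_1(y) = (1-tA_0(x))/(tA_1(x))$, so $\iota_1(y) - y$ can be singular only at points of $\Etproj$ lying over the zeros of $xA_1(x) = d_{-1,1}+d_{0,1}x+d_{1,1}x^2$. At each such zero $x_0$ the kernel degenerates to a linear polynomial in $y$, producing one finite root $y_0$ and one root at $y=\infty$; both $(x_0,y_0)$ and $(x_0,\infty)$ lie on $\Etproj$ and yield a simple pole of $\mathbf{b}_2$. A separate local computation at $\Omega$ shows $\mathbf{b}_2$ is regular there. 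Hence $\mathbf{b}_2$ has exactly $2$ simple poles in cases (1) and (5) of \eqref{the five step set} (where $d_{1,1}=0$) and $4$ simple poles in cases (2)--(4), organized into $\iota_1$-pairs.

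Next, I would pull back to $\P1(\C)$ via the parameterization $\phi$ of Proposition~\ref{prop:parameterizationcompautoBIS}, under which $\sigma$ becomes $\tilde{\sigma}(s)=qs$ with $q \in \R\setminus\{\pm 1\}$ by Proposition~\ref{lem:q}. The preimages of the poles of $\mathbf{b}_2$ are finitely many $s_0\in\C^*$ coming in pairs $\{s_0, 1/s_0\}$, since $x(s)=x(1/s)$. By Corollary~\ref{lem:ChenSinger}, the hypothesis of Proposition~\ref{prop:criteria} amounts to exhibiting one such $s_0$ for which no other pole of $\widetilde{b}_2=\mathbf{b}_2\circ\phi$ lies in the $q$-orbit of $s_0$; equivalently, for every other pole $s_0'$, the ratio $s_0'/s_0$ must fail to be an integer power of $q$.

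The main obstacle is verifying this non-coincidence uniformly in the weights across the five step sets. I would handle it by a case analysis using the explicit formulas of Propositions~\ref{prop:parameterizationcompautoBIS} and~\ref{lem:q}: the poles and $q$ are algebraic functions of $t$ over $\Q(d_{i,j})$ living in the controlled quadratic extension generated by $\sqrt{(1-d_{0,0}t)^2-4d_{-1,1}d_{1,-1}t^2}$, so the candidate coincidence $s_0'/s_0 = q^n$ translates into an algebraic identity between $t$, $\lambda$ and the $d_{i,j}$. Transcendence of $t$ over $\Q(d_{i,j})$ (Assumption~\ref{assumption:oneofthefive}) forces any such relation to hold as a polynomial identity in $t$, which can be tested and ruled out by comparing leading terms. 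The cases $d_{1,1}=0$ reduce to the single check $s_0^2\neq q^n$ for all $n\neq 0$, while cases (2)--(4) require comparing two $\iota_1$-pairs; once non-coincidence is verified in each of the five cases, Proposition~\ref{prop:criteria} delivers the differential transcendence of both $Q(x,0,t)$ and $Q(0,y,t)$, completing the proof of Theorem~\ref{theo:maintheotransc}.
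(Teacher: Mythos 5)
Your overall plan (apply Proposition~\ref{prop:criteria} to $\mathbf{b}_{2}$, pull back by $\phi$, and rule out orbit coincidences by treating both sides as algebraic functions of $t$ and comparing expansions at $t=0$) is indeed the paper's strategy, but your computation of the polar divisor of $\mathbf{b}_{2}$ is wrong, and the error is fatal to the way you then want to apply the criterion. Writing $\mathbf{b}_{2}=x\,(\iota_{1}(y)-y)$, the factor $\iota_{1}(y)-y$ is singular only over the zeros of $\widetilde{A}_{1}(x)=d_{-1,1}+d_{0,1}x+d_{1,1}x^{2}$ \emph{for finite $x$}, but you have overlooked the points $P_{1},P_{2}$ of $\Etproj$ lying over $x=[1:0]$: there the factor $x$ blows up while $\iota_{1}(y)-y$ stays finite, and formula \eqref{equ:secondmembreform} together with the fact that $\Delta^x$ has at most a simple zero at $[1:0]$ (Lemma~\ref{lem:disczeroes}) shows that $P_{1}$ and $P_{2}$ are \emph{always} poles of $\mathbf{b}_{2}$. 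In fact, for the step sets of \eqref{the five step set} with $d_{1,1}=0$ they are the dominant ones: the polar divisor is $3P_{1}+Q_{2}+\iota_{1}(Q_{2})$ when $d_{1,1}=d_{1,0}=0$, and $2P_{1}+2P_{2}+Q_{2}+\iota_{1}(Q_{2})$ when $d_{1,1}=0$ and $d_{1,0}d_{0,1}\neq 0$. So your count of ``exactly $2$ simple poles in cases (1), (5) and $4$ simple poles in cases (2)--(4)'' is incorrect.

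This omission is not cosmetic, because the poles you do retain can never satisfy the hypothesis of Proposition~\ref{prop:criteria}. Since $Q_{2}=\iota_{2}(Q_{1})$, one has $\iota_{1}(Q_{2})=\sigma^{-1}(Q_{1})$ and $\iota_{1}(Q_{1})=\sigma^{-1}(Q_{2})$; on the parameter side, if $\phi(u)=Q_{1}$ then the four $Q$-type poles pull back to $u,\,1/u,\,q/u,\,u/q$, which split into the two $\sigma$-orbit pairs $\{u,u/q\}$ and $\{1/u,q/u\}$. Hence every $Q$-type pole has another pole of the same order exactly one $\sigma$-step away, and the non-coincidence you propose to verify (``no other pole in the $q$-orbit of $s_{0}$'') is simply false for these points; no leading-term analysis can rescue it. The paper therefore applies the criterion at $P_{1}$ (or $P_{2}$): when $d_{1,1}\neq 0$ it uses Propositions~\ref{prop:quotient1} and~\ref{prop:quotient2} to show that no other pole lies in the orbit of $P_{1}$, and when $d_{1,1}=0$ it exploits the pole \emph{orders} listed above ($P_{1}$ of order $3$, resp.\ $P_{1},P_{2}$ of order $2$ with $P_{1}\not\sim P_{2}$ by Proposition~\ref{prop:quotient1}), which dominate the order-$1$ poles so that no orbit separation involving the $Q$-type poles is needed. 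This last device is unavoidable in case (1): precisely when $d_{1,0}=d_{1,1}=0$ the expansion-at-$t=0$ argument of Proposition~\ref{prop:quotient1} fails to separate the orbits of the simple poles, so your proposed ``single check $s_{0}^{2}\neq q^{n}$'' does not reduce the problem correctly there either. As it stands, your proposal has a genuine gap.
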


\begin{rem} (i)  Models of walks in three dimensions in the octant have been recently studied. In \cite{BoBMKaMe-16,DuHoWa-16}, the authors study such unweighted  models having at most six steps. Among the non trivial $35548$ models, $527$ are equivalent to weighted models of walks in the quarter plane,  in the sense of \cite[Definition 2]{BoBMKaMe-16} and  Assumption~\ref{assumption:oneofthefive} is satisfied for $69$ such models, see \cite[Section 3]{DuHoWa-16}. For these  models of two dimensional walks our results apply. For example, in \cite{DuHoWa-16}, the authors prove that one of the three dimensional unweighted  models of a walk in the octant is equivalent to the following weighted  model of a two dimensional  walk of genus zero:
\vspace{0.5cm}
\begin{center}\begin{tikzpicture}[scale=.4, baseline=(current bounding box.center)]
\foreach \x in {-1,0,1} \foreach \y in {-1,0,1} \fill(\x,\y) circle[radius=2pt];
\draw[thick,->](0,0)--(-1,1);
\draw[thick,->](0,0)--(0,1);
\draw[thick,->](0,0)--(1,-1);
\put(-30,15){{$\small{1/2}$}}
\put(-5,15){{$\small{1/4}$}}
\put(15,-15){{$\small{1/4}$}}
\end{tikzpicture}\end{center}

\vspace{.1in}

(ii)  Combining Theorem \ref{theo:maintheotransc} with Remark \ref{rem:decouplingfunction}, we have proved that in the genus zero situation there are no decoupling functions.

\end{rem}

The proof of Theorem~\ref{theo:maintheotransc} will be given at the very end of this section. Our strategy will be to use Proposition~\ref{prop:criteria}. So, we begin by collecting information concerning the poles of $\mathbf{b}_{2} = x(\iota_{1}(y)-y)$.  
 
 \subsection{Preliminary results concerning the poles of $\mathbf{b}_{2}$}

We write 
$$
\mathbf{b}_{2}=x(\iota_{1}(y)-y)
$$ 
in the projective coordinates $([x_0,x_1],[y_0,y_1])$ with $x=\frac{x_{0}}{x_{1}}$ and $y=\frac{y_{0}}{y_{1}}$. We note that ${\Omega=([0:1],[0:1])}$ is not a pole of $\mathbf{b}_{2}$. Since we want to compute the poles of $\mathbf{b}_{2}$, it is natural to start with the poles of $xy$. Therefore let us focus our attention on the points ${([x_{0}:x_{1}], [y_0:y_1])}$ of $\Etproj$ corresponding to the equation $x_1y_1=0$, namely:
\begin{align*}
P_1 =([1:0], [\beta_0:\beta_1]), & P_2 =\iota_1(P_1)=([1:0], [\beta_0':\beta_1']), \\ 
Q_1=([\alpha_0:\alpha_1],[1:0]), & Q_2=\iota_2(Q_1)=([\alpha_0':\alpha_1'],[1:0]).
\end{align*}
Since $P_{1},P_{2}\in \Etproj$, to compute $[\beta_0:\beta_1]$ and $[\beta'_0:\beta'_1]$, we have to solve $\overline{K}(1,0,y_0,y_1,t)=0$. We then find that  $[\beta_0:\beta_1]$ and $[\beta'_0:\beta'_1]$ are the roots in $\P1(\C)$ of the homogeneous polynomial in $y_{0}$ and $y_{1}$ given by  
$$
d_{1,-1}y_1^2 +d_{1,0}y_0y_1 +d_{1,1}y_0^2=0.
$$ 
Similarly, the $x$-coordinates $[\alpha_0:\alpha_1]$ and $[\alpha'_0:\alpha'_1]$ of $Q_1$ and $Q_2$ are the roots in $\P1(\C)$ of the homogeneous polynomial in $x_{0}$ and $x_{1}$ given by  
$$
d_{-1,1}x_1^2 +d_{0,1}x_0x_1 +d_{1,1}x_0^2=0.
$$ 
 Although the  following Lemma already appears in \cite[Lemma 4.11]{DHRS}, we give its proof to be self-contained.

\begin{lemma}\label{lemma:divisorsecondmember}
The set of poles of ${\mathbf{b}_{1}}= \iota_1(y)\left(\sigma (x)-x\right)$ in $\Etproj$ is contained in 
$$\mathcal{S}_{1}=\{\iota_{1}(Q_1),\iota_{1}(Q_2), P_1,P_2,\sigma^{-1}(P_1),\sigma^{-1}(P_2)\}.$$
Similarly, the set of poles of ${\mathbf{b}_{2}}=x(\iota_1(y)-y)$ in $\Etproj$ is contained in 
$$\mathcal{S}_{2}=\{P_1,P_2, Q_1,Q_2,\iota_1(Q_1),\iota_1(Q_2)\}=\{P_1,P_2,Q_1,Q_2,\sigma^{-1}(Q_1),\sigma^{-1}(Q_2)\}.$$
Moreover, we have
\begin{equation}\label{equ:secondmembreform}
({\mathbf{b}_{2}})^2=\frac{ x_0^2 \Delta^x_{[x_0:x_1]}}{x_1^2(\sum_{i=0}^2  x_0^i x_1^{2-i}td_{i-1,1} )^2 }.
\end{equation}  
\end{lemma}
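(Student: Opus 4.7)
The plan is to prove the three statements of the lemma in sequence, each reducing to a standard calculation on the kernel curve.

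\textbf{Poles of the basic functions.} First I would compute the polar loci of $x$, $y$, $\iota_1(y)$, and $\sigma(x)$ on $\Etproj$. Viewed as the projection $\Etproj \to \P1(\C)$, $[x_0:x_1]\mapsto x_0/x_1$, the function $x$ has poles exactly at the points of $\Etproj$ with $x_1=0$, namely $P_1$ and $P_2$; similarly, $y$ has poles exactly at $Q_1$ and $Q_2$. Since $\iota_1$, $\iota_2$, $\sigma$ are biregular automorphisms of $\Etproj\setminus\{\Omega\}$, pulling back shows that the poles of $\iota_1(y)=y\circ\iota_1$ are contained in $\{\iota_1(Q_1),\iota_1(Q_2)\}$, and those of $\sigma(x)=x\circ\sigma$ in $\{\sigma^{-1}(P_1),\sigma^{-1}(P_2)\}$. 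The poles of a product are contained in the union of the polar loci of the factors, and those of a difference in the union of the polar loci of the summands, whence the claimed inclusions in $\mathcal{S}_1$ and in $\{P_1,P_2,Q_1,Q_2,\iota_1(Q_1),\iota_1(Q_2)\}$.

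\textbf{The set equality for $\mathcal{S}_2$.} Here I would use that the horizontal switch $\iota_2$ permutes the two points of $\Etproj$ on each horizontal line $\P1(\C)\times\{[y_0:y_1]\}$; applied to the line at infinity $y_1=0$, this gives $\iota_2(Q_1)=Q_2$ and $\iota_2(Q_2)=Q_1$. From $\sigma^{-1}=\iota_1\iota_2$ it follows that $\sigma^{-1}(Q_1)=\iota_1(Q_2)$ and $\sigma^{-1}(Q_2)=\iota_1(Q_1)$, yielding the second description of $\mathcal{S}_2$.

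\textbf{The formula for $(\mathbf{b}_2)^2$.} I would view $\overline K(x_0,x_1,y_0,y_1,t)$ as a homogeneous polynomial of degree $2$ in $(y_0,y_1)$ with coefficients in $\C[x_0,x_1]$. For a generic fixed $[x_0:x_1]$, its two roots in $[y_0:y_1]$ are precisely the $y$-coordinates of the two points of $\Etproj$ lying above $[x_0:x_1]$, namely $y$ and $\iota_1(y)$. Dehomogenizing by $y=y_0/y_1$ turns $\overline K/y_1^2$ into a quadratic polynomial in $y$ whose leading coefficient is (up to sign) $(t/x_1^2)\sum_{i=0}^2 d_{i-1,1}x_0^i x_1^{2-i}$. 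By Vieta's formulas,
$$
(\iota_1(y)-y)^2 \;=\; \frac{\Delta^x_{[x_0:x_1]}}{x_1^{\,4}\cdot\bigl(\text{leading coefficient}\bigr)^2},
$$
where $\Delta^x_{[x_0:x_1]}$ is the bihomogeneous discriminant in $(x_0,x_1)$ of this quadratic, a degree $4$ form. Multiplying by $x^2=x_0^2/x_1^2$ and simplifying yields the stated identity \eqref{equ:secondmembreform}.

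\textbf{Main obstacle.} None of the three steps is technically deep; the only non-routine point is a bookkeeping one, namely verifying that $\iota_2$ exchanges $Q_1$ and $Q_2$ (the analogous statement need not hold at other special points of $\Etproj$, so one must check it against the explicit equation $\overline K(x_0,x_1,1,0,t)=0$), together with keeping careful track of homogeneous degrees in the discriminant computation so that $\Delta^x_{[x_0:x_1]}$ is the correctly homogenized degree-$4$ form appearing in \eqref{equ:secondmembreform}.
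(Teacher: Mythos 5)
Your proposal is correct and follows essentially the same route as the paper: the pole locations follow from the elementary observation that the polar loci of $x$, $y$, $\iota_1(y)$, $\sigma(x)$ are $\{P_1,P_2\}$, $\{Q_1,Q_2\}$, $\{\iota_1(Q_1),\iota_1(Q_2)\}$, $\{\sigma^{-1}(P_1),\sigma^{-1}(P_2)\}$ (the paper calls this part straightforward), and the identity \eqref{equ:secondmembreform} is obtained exactly as in the paper, by noting that $y$ and $\iota_1(y)$ are the two roots of the quadratic $\overline{K}(x_0,x_1,\cdot,t)$ and that the square of their difference is the discriminant divided by the square of the leading coefficient. The only cosmetic remark is that $\iota_2(Q_1)=Q_2$ needs no verification since $Q_2$ is defined as $\iota_2(Q_1)$, so the equality $\sigma^{-1}(Q_i)=\iota_1(Q_j)$ is immediate from $\sigma^{-1}=\iota_1\circ\iota_2$.
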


 \begin{proof}
The proofs of the assertions about the location of the poles of ${\mathbf{b}_{1}}$ and ${\mathbf{b}_{2}}$ are straightforward. 
Let us prove  \eqref{equ:secondmembreform}. By definition,  the $y$ coordinates of $ \iota_1 ( \frac{y_0}{y_1} )$ and $\frac{y_0}{y_1} $ are the two roots of the polynomial $y\mapsto \overline{K}(x_0,x_1,y,t)$. The square of their difference equals to the discriminant divided by the square of the leading term,  that is,
$$
\left(  \iota_1 ( \frac{y_0}{y_1} ) -  \frac{y_0}{y_1} \right)^2= \frac{ \Delta^x_{[x_0:x_1]}}{(\sum_i  x_0^i x_1^{2-i}td_{i-1,1} )^2 }.
$$
Therefore, we find
$$
{\mathbf{b}_{2}}\left(\frac{x_0}{x_1},\frac{y_0}{y_1}\right)^2=\frac{ x_0^2 \Delta^x_{[x_0:x_1]}}{x_1^2(\sum_i  x_0^i x_1^{2-i}td_{i-1,1} )^2 }.
$$
\end{proof}

To apply Proposition \ref{prop:criteria} we now need to separate the orbits. Let us begin  with  $P_{1}$ and $P_{2}$ (resp. $Q_{1}$ and $Q_{2}$). In what follows, we will use the equivalent relation $\sim$ on $\Etproj$ defined, for $P,Q\in \Etproj$, by 
$$P\sim Q \Leftrightarrow \exists \ell\in \Z, \sigma^{\ell}(P)=Q.
$$

\begin{prop}\label{prop:quotient1}
 If $P_{1}\neq P_{2}$, then one of the following properties holds:   
\begin{itemize}
\item $P_{1}\not\sim P_{2}$;
\item $d_{0,1}=d_{1,1}=0$. 
\end{itemize} 
If $Q_{1}\neq Q_{2}$, then one of the following properties holds: 
\begin{itemize}
\item $Q_{1}\not\sim Q_{2}$;
\item $d_{1,0}=d_{1,1}=0$. 
\end{itemize} 
\end{prop}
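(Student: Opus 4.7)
The plan is to transport the condition to the parameter $s$ via the uniformization $\phi$ of Proposition~\ref{prop:parameterizationcompautoBIS} and then exploit the transcendence of $t$. Since $\phi$ is a bijection from $\P1(\C)\setminus\{0,\infty\}$ onto $\Etproj\setminus\{\Omega\}$ and $P_1,P_2\notin\{\Omega\}$, I write $P_i=\phi(s_i)$ with $s_i\in\C^{*}$. The commutative diagram $\phi\circ\iup_1=\iota_1\circ\phi$, together with $\iup_1(s)=1/s$, forces $s_2=1/s_1$. Moreover, $P_1\neq P_2$ amounts to the polynomial $d_{1,-1}y_1^2+d_{1,0}y_0y_1+d_{1,1}y_0^2$ having distinct roots, that is to $\a_4(t)\neq 0$, which in turn gives $s_1\neq\pm 1$.

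Next, since $\tilde{\sigma}(s)=qs$, the hypothesis $P_1\sim P_2$ translates to $q^n s_1=1/s_1$ for some $n\in\Z$, i.e.\ $s_1^2=q^{-n}$. Proposition~\ref{lem:q} gives $|q|\neq 1$, hence $n\neq 0$. The condition $x(s_1)=[1:0]$, read from the formula for $x$ in Proposition~\ref{prop:parameterizationcompautoBIS}, yields $(s_1+1/s_1)\sqrt{\a_3^2-4\a_2\a_4}=2\a_3$; squaring and using $s_1^2+1/s_1^2=q^n+q^{-n}$ gives the key identity
\begin{equation*}
(q^n+q^{-n}-2)\,\a_3^2\;=\;4(q^n+q^{-n}+2)\,\a_2\a_4.
\end{equation*}
A direct computation from Proposition~\ref{lem:q} gives $q+q^{-1}+2=(1-td_{0,0})^2/(d_{1,-1}d_{-1,1}t^2)$ and $q+q^{-1}-2=\a_2(t)/(d_{1,-1}d_{-1,1}t^2)$, and $q^n+q^{-n}$ is a polynomial in $q+q^{-1}$ with integer coefficients by the standard Chebyshev-type recursion. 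Hence, after clearing denominators, the displayed identity becomes a polynomial identity in $t$ which, because $t$ is transcendental over $\Q(d_{i,j})$, must hold identically in $t$.

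The main technical obstacle is to extract the vanishing $d_{1,1}=d_{0,1}=0$ from this polynomial identity, uniformly in $n$. For $|n|=1$ the identity simplifies (after cancelling the factor $\a_2$, which is nonzero since its constant term in $t$ equals $1$) to $\a_3^2=4(1-td_{0,0})^2\a_4$; matching the $t^2$-coefficients of both sides gives $d_{1,0}^2=d_{1,0}^2-4d_{1,1}d_{1,-1}$, whence $d_{1,1}=0$ as $d_{1,-1}>0$, and matching the $t^3$-coefficients then forces $d_{1,0}d_{0,1}d_{1,-1}=0$, so $d_{0,1}=0$ (the alternative $d_{1,0}=0$ would combine with $d_{1,1}=0$ to give $\a_4=0$, contradicting $P_1\neq P_2$). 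For $|n|\geq 2$, the same lowest-order analysis (in $t$) still forces $d_{1,1}d_{1,-1}=0$, and the subsequent coefficient matching either yields $d_{0,1}=0$ directly or produces the relation $d_{1,0}=0$, which together with $d_{1,1}=0$ again contradicts $\a_4\neq 0$; hence in this regime $P_1\not\sim P_2$. The assertion for $Q_1,Q_2$ follows by the entirely symmetric argument, interchanging the roles of $(\iota_1,\a_i,\iup_1)$ and $(\iota_2,\b_i,\iup_2)$ and using $\iup_2(s)=\lambda^2/s$ in place of $\iup_1(s)=1/s$.
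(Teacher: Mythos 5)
Your argument is correct and, at bottom, follows the same strategy as the paper -- pull everything back through the uniformization $\phi$, turn $P_{1}\sim P_{2}$ into an identity involving $q$ at the fixed transcendental $t$, promote it to an identity of rational functions of $t$, and exploit the behaviour near $t=0$ -- but the implementation is genuinely different and in some respects cleaner. The paper writes down $p_{1},p_{2}$ explicitly (with square roots, and with the attendant branch choices), sets $p_{1}/p_{2}=q^{\ell}$, and derives a contradiction from incompatible leading behaviours at $t=0$, treating $d_{1,1}\neq 0$ and $d_{1,1}=0$ as separate cases; you instead eliminate all radicals by squaring the relation $x(s_{1})=[1:0]$ together with $s_{1}^{2}=q^{-n}$, arriving at the root-free identity $(q^{n}+q^{-n}-2)\,\a_{3}^{2}=4(q^{n}+q^{-n}+2)\,\a_{2}\a_{4}$, which is symmetric in $q\leftrightarrow q^{-1}$ (so no branch bookkeeping) and yields both $d_{1,1}=0$ and $d_{0,1}=0$ from exact low-order coefficient equations. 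The one place where your write-up is thinner than it should be is the case $|n|\geq 2$: the correct justification is that $(q^{n}+q^{-n}+2)/(q^{n}+q^{-n}-2)=1+O(t^{2|n|})$, so the identity reads $\a_{3}^{2}=4\a_{2}\a_{4}+O(t^{2|n|+2})$ and the $t^{2}$- and $t^{3}$-coefficients of $\a_{3}^{2}$ and $4\a_{2}\a_{4}$ must agree for \emph{every} $n\neq 0$; once this is said, your split into $|n|=1$ and $|n|\geq 2$ is unnecessary. Two small blemishes, neither fatal: $n\neq 0$ follows from $s_{1}\neq\pm 1$ (i.e.\ from $P_{1}\neq P_{2}$), not from $|q|\neq 1$; and your closing claim that for $|n|\geq 2$ one gets $P_{1}\not\sim P_{2}$ is a non sequitur, since your own dichotomy there allows the branch $d_{1,1}=d_{0,1}=0$ without any contradiction -- but this does not affect the proposition, whose conclusion is exactly the disjunction you establish in every branch, and the symmetric treatment of $Q_{1},Q_{2}$ (with $r_{1}^{2}=q^{1-m}$ and the $\b_{i}$'s) goes through as you indicate.
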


\begin{proof}
We only prove the statement for the $P_{i}$, the proof for the $Q_{j}$ being similar. Let $p_{1},p_{2} \in \C^{*}$ be such that $\phi(p_{1})=P_{1}$ and $\phi(p_{2})=P_{2}$.  Recall that Proposition \ref{prop:parameterizationcompautoBIS} ensures that 
$$\begin{array}{lllllll}
\a_{2}&=&1-2td_{0,0}+t^{2}d_{0,0}^{2}-4t^{2}d_{-1,1}d_{1,-1} \\
\a_{3}&=&2t^{2}d_{1,0}d_{0,0}-2td_{1,0}-4t^{2}d_{0,1}d_{1,-1}\\
\a_{4}&=&t^{2}(d_{1,0}^{2}-4d_{1,1}d_{1,-1})
\end{array} $$
and that, according to
 Proposition \ref{lem:qBIS}, one of the two complex numbers $q$ or $q^{-1}$ is equal to 
$$ 
\dfrac{-1+d_{0,0}t-\sqrt{(1-d_{0,0}t)^{2}-4d_{1,-1}d_{-1,1}t^{2}}}{-1+d_{0,0}t+\sqrt{(1-d_{0,0}t)^{2}-4d_{1,-1}d_{-1,1}t^{2}}}.
$$ The explicit formula for $\phi$ given in Proposition \ref{prop:parameterizationcompautoBIS} shows that $p_{1}$ and $p_{2}$ are the roots of 
$$-\sqrt{\alpha_{3}^{2}-4\alpha_{2}\alpha_{4}}X^{2}+2\alpha_{3}X-\sqrt{\alpha_{3}^{2}-4\alpha_{2}\alpha_{4}}=0.$$
So, we have (for suitable choices of the complex square roots\footnote{{Since $p_i$ is chosen so that $\phi(p_i)=P_i$, we need to take the square roots consistent with this selection. }}) 

$$
p_{1}= \frac{-\alpha_{3} - 2\sqrt{\alpha_{2}\alpha_{4}}}{-\sqrt{\alpha_{3}^{2}-4\alpha_{2}\alpha_{4}}}
\text{ and }
p_{2}= \frac{-\alpha_{3} + 2\sqrt{\alpha_{2}\alpha_{4}}}{-\sqrt{\alpha_{3}^{2}-4\alpha_{2}\alpha_{4}}}.
$$

Assume that $P_{1}\sim P_{2}$. Then, there exists $\ell\in \Z^{*}$ such that   
$
\frac{p_{1}}{p_{2}} =q^{\ell}
$
($\ell \neq 0$ because $P_{1} \neq P_{2}$). Using the above formulas for $p_{1}$, $p_{2}$ and $q$ and replacing $\ell$ by $-\ell$ if necessary, this can be rewritten as: 
\begin{equation}\label{s0s1}
\frac{-\alpha_{3}- 2\sqrt{\alpha_{2}\alpha_{4}}}{-\alpha_{3}+ 2\sqrt{\alpha_{2}\alpha_{4}}} =\left(\frac{-1+d_{0,0}t-\sqrt{(1-d_{0,0}t)^{2}-4d_{1,-1}d_{-1,1}t^{2}}}{-1+d_{0,0}t+\sqrt{(1-d_{0,0}t)^{2}-4d_{1,-1}d_{-1,1}t^{2}}}\right)^{\ell}. 
\end{equation}

Recall that $t$ is transcendental. We shall treat $t$ as a variable and both sides of (\ref{s0s1}) as functions of the variable $t$, algebraic over $\Q(t)$. Formula \eqref{s0s1} shows that these algebraic functions coincide at some transcendental number, therefore they are equal. 

We now consider these algebraic functions near $0$ (we choose an arbitrary branch) and will derive a contradiction by proving that they have different behaviors at $0$. \par 
If $d_{1,1} \neq 0$, then, considering the Taylor expansions at $0$ in \eqref{s0s1}, we obtain, up to replacing  $\ell$ by $-\ell$ if necessary: 
$$
\frac{d_{1,0}-\Delta_1}{d_{1,0}+\Delta_1} + O(t) = \left(\frac{1}{t^2}\left(\frac{1}{d_{1,-1}d_{-1,1}} + O(1/t)\right)\right)^{\ell}
$$
where $\Delta_1$ is some square root of $d_{1,0}^{2}-4d_{1,1}d_{1,-1}$, and $d_{1,0}-\Delta_1$ and $d_{1,0}+\Delta_1$ are not $0$ because $d_{1,1} \neq 0$ ({note that, by Assumption \ref{assumption:oneofthefive}, we have $d_{1,-1}d_{-1,1}\neq 0$}). This equality is impossible. 

If $d_{1,1}= 0$, then \eqref{s0s1} gives   
$$
t\frac{d_{0,1}d_{1,-1}}{d_{1,0}}+O(t^{2})
=
 \left(\frac{1}{t^2}\left(\frac{1}{d_{1,-1}d_{-1,1}} + O(1/t)\right)\right)^{\ell} 
$$ 
(note that we have $d_{1,0}\neq 0$ because $P_{1}\neq P_{2}$). This implies $d_{0,1}= 0$ and conclude{s} the proof. 
\end{proof}

\begin{prop}\label{prop:quotient2}
Assume that $d_{1,1}\neq 0$. Then, for any $i,j \in \{1,2\}$, we have $P_{i}\not\sim  Q_{j}$.
\end{prop}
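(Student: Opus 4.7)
The plan is to follow the strategy of the proof of Proposition~\ref{prop:quotient1}: we use the rational uniformization $\phi=(x,y):\P1(\C)\to\Etproj$ to translate the claim into an impossibility statement about algebraic functions of the transcendental parameter $t$, and then derive a contradiction by a $t$-adic valuation argument at $t=0$.

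First, pick preimages $p_i,q_j\in\C^*$ with $\phi(p_i)=P_i$ and $\phi(q_j)=Q_j$. Since $d_{1,1}\neq 0$, neither of the quadratics $d_{1,-1}y_1^2+d_{1,0}y_0y_1+d_{1,1}y_0^2=0$ and $d_{-1,1}x_1^2+d_{0,1}x_0x_1+d_{1,1}x_0^2=0$ has $[1:0]$ among its roots, so $P_i$ has finite $y$-coordinate while $Q_j$ has infinite one; in particular $P_i\neq Q_j$. Consequently $P_i\sim Q_j$ would force the existence of $\ell\in\Z\setminus\{0\}$ with $q^\ell p_i=q_j$.

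Next, from the formula of Proposition~\ref{prop:parameterizationcompautoBIS}, the $p_i$'s are the roots of
\[
-\sqrt{\alpha_3^2-4\alpha_2\alpha_4}\,X^2+2\alpha_3\,X-\sqrt{\alpha_3^2-4\alpha_2\alpha_4}=0,
\]
while the $q_j$'s are the roots of
\[
-\sqrt{\beta_3^2-4\beta_2\beta_4}\,X^2+2\lambda\beta_3\,X-\lambda^2\sqrt{\beta_3^2-4\beta_2\beta_4}=0.
\]
Vieta's formulas give $p_1 p_2 = 1$ and $q_1 q_2 = \lambda^2 = q$. Viewing both sides of $q^\ell p_i=q_j$ as algebraic functions of the indeterminate $t$ with a coherent choice of branches for all the square roots (exactly as in the proof of Proposition~\ref{prop:quotient1}), we compute the leading expansions
\[
\alpha_3^2-4\alpha_2\alpha_4 = 16\,d_{1,1}d_{1,-1}\,t^2 + O(t^3), \qquad \beta_3^2-4\beta_2\beta_4 = 16\,d_{1,1}d_{-1,1}\,t^2 + O(t^3).
\]
The hypothesis $d_{1,1}\neq 0$ together with $d_{1,-1}d_{-1,1}\neq 0$ (from Assumption~\ref{assumption:oneofthefive}) then imply that both discriminants have $t$-adic valuation exactly $2$, and a direct inspection of leading terms shows that each $p_i$ and each $q_j/\lambda$ tends to a nonzero finite limit at $t=0$. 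Hence $v_t(p_i)=0$ and $v_t(q_j)=v_t(\lambda)$. Finally, Proposition~\ref{lem:q} gives $v_t(q)=\pm 2$ (its explicit numerator is nonvanishing at $t=0$ while its denominator has $t$-valuation $2$), so $v_t(\lambda)=\pm 1$ and $v_t(q_j)=\pm 1$ with the same sign as $v_t(q)/2$.

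The conclusion is then a parity obstruction: an identity $q^\ell p_i=q_j$ with $\ell\in\Z$ would force $\pm 2\ell = v_t(q_j)-v_t(p_i) = \pm 1$, which is impossible. The only real subtlety, already dealt with in Proposition~\ref{prop:quotient1}, is the justification that both sides of $q^\ell p_i=q_j$ can legitimately be regarded as algebraic functions of a variable $t$ and that the $t$-adic valuation at $0$ can be computed consistently on both sides; once this is in place, the parity mismatch settles the claim uniformly, independently of which of the five step sets is under consideration.
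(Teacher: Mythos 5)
Your proposal is correct and follows essentially the same route as the paper: you use the explicit uniformization to write $p_i$ and $q_j$ via the same quadratics, treat the relation as an identity of algebraic functions of $t$ (justified as in Proposition~\ref{prop:quotient1}), and extract a contradiction from the behavior at $t=0$ using $d_{1,1}d_{1,-1}d_{-1,1}\neq 0$. Your valuation bookkeeping ($v_t(q)=\pm2$, $v_t(\lambda)=\pm1$, $v_t(p_i)=v_t(q_j/\lambda)=0$, so even $=$ odd) is just a repackaging of the paper's comparison of Taylor expansions in \eqref{s0s1bis}, where the half-integer exponent $\ell+\frac12$ plays exactly this parity role.
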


\begin{proof}
Let $p_{i},q_{j}\in \C^{*}$ be such that $\phi(p_{i})=P_{i}$ and $\phi(q_{j})=Q_{j}$. 
As seen at the beginning of the proof of Proposition \ref{prop:quotient1}, we have (for suitable choices of the square roots) 
$$
p_{i}= \frac{-\alpha_{3} - 2\sqrt{\alpha_{2}\alpha_{4}}}{-\sqrt{\alpha_{3}^{2}-4\alpha_{2}\alpha_{4}}}.
$$
Similarly, we have (for suitable choices of the square roots) 
$$
q_{j}=\lambda \frac{-\beta_{3} - 2\sqrt{\beta_{2}\beta_{4}}}{-\sqrt{\beta_{3}^{2}-4\beta_{2}\beta_{4}}}.
$$

Suppose to the contrary that $P_{i}\sim  Q_{j}$. The condition $d_{1,1}\neq 0$ yields that $P_{i}\neq Q_{j}$. Then, there exists $\ell\in \Z^{*}$ such that   
$
\frac{p_{i}}{q_{j}}=q^{\ell}
$. 
Using the above formulas for $p_{i}$ and $q_{j}$, using Proposition~\ref{lem:qBIS} and replacing $\ell$ by $-\ell$ if necessary, this can be rewritten as:
\begin{equation}\label{s0s1bis}
\frac{\alpha_{3}+2\sqrt{\alpha_{2}\alpha_{4}}}{\sqrt{\alpha_{3}^{2}-4\alpha_{2}\alpha_{4}}}\frac{\sqrt{\beta_{3}^{2}-4\beta_{2}\beta_{4}}}{\beta_{3}+2\sqrt{\beta_{2}\beta_{4}}} =\left(\frac{-1+d_{0,0}t-\sqrt{(1-d_{0,0}t)^{2}-4d_{1,-1}d_{-1,1}t^{2}}}{-1+d_{0,0}t+\sqrt{(1-d_{0,0}t)^{2}-4d_{1,-1}d_{-1,1}t^{2}}}\right)^{\ell+\frac12}.
\end{equation}
As in the proof of Proposition \ref{prop:quotient1}, we can treat $t$ as a variable and both sides of (\ref{s0s1bis}) as functions of the variable $t$ algebraic over $\Q(t)$, the above equality shows that they coincide, and we shall now consider these algebraic functions near $0$ (we choose an arbitrary branch). Considering the Taylor expansions at $0$ in \eqref{s0s1bis}, we obtain: 
$$
\frac{-d_{1,0} - \Delta_1}{\sqrt{d_{1,0}^{2}-{\Delta_1}^2}}\frac{\sqrt{d_{0,1}^{2}-{\Delta_2}^2}}{-d_{0,1} - \Delta_2} + O(t)= \left(\frac{1}{t^2}\left(\frac{1}{d_{1,-1}d_{-1,1}} + O(t)\right)\right)^{\ell+\frac12}
$$
where $\Delta_1$ and $\Delta_2$ are suitable square roots of   
$
d_{1,0}^{2}-4d_{1,1}d_{1,-1}
$
and 
$ 
d_{0,1}^{2}-4d_{1,1}d_{-1,1}
$ 
respectively, and none of the numbers $-d_{1,0} - \Delta_1,\sqrt{d_{1,0}^{2}-{\Delta_1}^2}, \sqrt{d_{0,1}^{2}-{\Delta_2}^2}, -d_{0,1} - {\Delta_2}^2$ is zero because $d_{1,1} \neq 0$. 
This equality is impossible.
\end{proof}

\subsection{Proof of Theorem \ref{theo:maintheotransc}} 
 We shall use the criteria of Proposition~\ref{prop:criteria}  applied to ${\mathbf{b}_{2}}$. From the expression of $a_{3}$ and $a_{4}$ given in Section \ref{sec1}, we may deduce that $a_{3}\neq a_{4}$ and therefore $\Delta^x_{[x_0:x_1]}$ seen as a function on $\P1(\C)$ has at most a simple zero at $P_{1}$ and $P_{2}$. With \eqref{equ:secondmembreform} we find that $P_{1}$ and $P_{2}$ are poles of ${\mathbf{b}_{2}}$.

If  $d_{1,1}=d_{1,0}=0$ (and $d_{0,1}\neq 0$ by Assumption \ref{assumption:oneofthefive}), then a direct calculation shows that 
the polar divisor of ${\mathbf{b}_{2}}$\footnote{{We recall that the polar divisor of $\mathbf{b}_{2}$ is the formal $\Z$-linear combination of points of $\Etproj$ given by $\sum_{P \in \Etproj} n_{P} P$ where $n_{P}$ is equal to $0$ if $P$ is not a pole of $\mathbf{b}_{2}$ and equal to the order of $P$ as a pole of $\mathbf{b}_{2}$ otherwise.}} on $\Etproj$ is 
$
3P_{1}+Q_{2}+\iota_{1}(Q_{2})
$ 
where 
\begin{itemize}
\item $P_{1}=P_{2}=Q_{1}=([1:0],[1:0])$,
\item $Q_{2}= ([-d_{-1, 1}:d_{0, 1}],[1:0])$,
\item $\iota_{1}(Q_{2})=([-d_{-1, 1}:d_{0, 1}], [-td_{1,-1}d_{-1,1}:d_{0,1}(1-td_{0,0})]) \neq Q_{2}$.
\end{itemize}
 The result is now a direct consequence of Proposition \ref{prop:criteria} because $P_{1}$ is a pole of order three of ${\mathbf{b}_{2}}$, and all the other poles of ${\mathbf{b}_{2}}$ have order $1$. 
 
 The case $d_{1,1}=d_{0,1}=0$ is similar.  \par 

{
Assume that  $d_{1,1}=0$ and $d_{1,0}d_{0,1}\neq 0$. In this case, we have 
\begin{itemize}
\item $P_{1}=Q_{1}=([1:0],[1:0])$,
\item  $P_{2}=\iota_{1}(Q_{1})=([1:0],[-d_{1,-1}:d_{1,0}])$,
\item $Q_2=([-d_{-1, 1}:d_{0, 1}],[1:0])$,
\item $\iota_1(Q_2)=([-d_{-1, 1}:d_{0, 1}], [-td_{1,-1}d_{-1,1}:d_{0,1}(1-td_{0,0})+td_{1,0}d_{-1,1}])$.
\end{itemize}
Note that these four points are two by two distinct (since
$d_{0,1} \neq0$ and $t$ is transcendental,  the quantity $d_{0,1}(1-td_{0,0})+td_{1,0}d_{-1,1}$ does not vanish).
A direct computation shows that  the polar divisor of ${\mathbf{b}_{2}}$ on $\Etproj$ is 
$
2P_{1}+2 P_2+Q_{2}+\iota_{1}(Q_{2})
.$ Proposition \ref{prop:quotient1} ensures that $P_{1} \not \sim P_{2}$. So, $P=P_{1}$ or $P_{2}$ is such that none of the $\sigma^i(P)$ with $i \in \Z \backslash \{0\}$ is a pole of order $\geq 2$ of ${\mathbf{b}_{2}}$. The result is now a consequence of Proposition \ref{prop:criteria}. }

Last, assume that $d_{1,1}\neq 0$.
Then, combining Proposition \ref{prop:quotient1} and Proposition \ref{prop:quotient2}, and using the fact that the set of poles of ${\mathbf{b}_{2}}$ is included in $\{P_1,P_2,Q_1,Q_2,\sigma^{-1}(Q_1),\sigma^{-1}(Q_2)\}$, we get that $P_{1}$ is such that none of the $\sigma^i(P_{1})$ with $i \in \Z \backslash \{0\}$ is a pole of ${\mathbf{b}_{2}}$. The result is now a consequence of Proposition \ref{prop:criteria}.

\bibliography{walkbib}
\end{document}